\author{Tamanna Chatterjee}
\date{\today}
\title{A Note on Primitive Pairs for Graded Lie Algebras}
\theoremstyle{plain}
\newtheorem{theorem}{Theorem}[section]
\newtheorem{lemma}[theorem]{Lemma}
\newtheorem{pro}[theorem]{Proposition}
\theoremstyle{definition}
\newtheorem{definition}[theorem]{Definition}
\newtheorem{remark}[theorem]{Remark}
\newcommand{\mf}{\mathfrak}
\newcommand{\mc}{\mathcal}
\newcommand{\mb}{\mathbb}
\newcommand{\ms}{\mathscr}
\DeclareMathOperator{\p}{Perv}
\DeclareMathOperator{\f}{For}
\DeclareMathOperator{\rhm}{R\mathscr{H}\text{\kern -3pt{\calligra\large om}}\, }
\DeclareMathOperator{\lie}{Lie}
\DeclareMathOperator{\h}{H}
\DeclareMathOperator{\cu}{cusp}
\DeclareMathOperator{\ind}{^{\mathfrak{g},\mathfrak{p}}\mathcal{I}}
\DeclareMathOperator{\inn}{Ind}
\DeclareMathOperator{\rnn}{Res}
\DeclareMathOperator{\i'}{\mathscr{I}}
\DeclareMathOperator{\phign}{\Phi_{\mathfrak{g}_n}}
\begin{document}

\maketitle

\begin{abstract}
We develop a theory of \emph{primitive pairs} for $\mathbb{Z}$-graded Lie algebras when the sheaves have coefficients in a field $\Bbbk$  of positive characteristic, providing a graded analogue of the role played by cuspidal pairs in the generalized Springer correspondence. We consider the centralizer $G_0$ of a fixed cocharacter $\chi$ in a connected, reductive, algebraic group $G$ and its action on the eigenspaces $\mf{g}_n$ of $\chi$. Building on the framework of parity sheaves and the Fourier transform established in \cite{Ch,Ch1}, we show that every indecomposable parity sheaf  on  $\mathfrak{g}_n$  can be expressed as a direct summand of a complex induced from primitive data on a Levi subgroup. This result extends the fact that,  in the graded setting, any indecomposable parity sheaf is direct summand of an induced cuspidal datum \cite{Ch}. This confirms the organizing role of primitive pairs in the block decomposition of the category of $G_0$-equivariant parity sheaves on $\mathfrak{g}_n$. We further establish that primitive pairs on the nilpotent cone induce primitive pairs in the graded setting, and we prove that primitivity is preserved under the Fourier--Sato transform. These results reveal a deep compatibility between the geometry of graded Lie algebras and their representation-theoretic structures, forming the foundation for a graded version of the generalized Springer correspondence in positive characteristic.

\end{abstract}

\section*{Introduction}

The generalized Springer correspondence provides a geometric framework connecting the representation theory of reductive groups with the geometry of their nilpotent cones. Let $G$ be a complex, connected, reductive algebraic group, and let $\mathfrak{g}$ denote its Lie algebra. In both characteristic zero and positive characteristic, the category 
$\mathscr{I}(G)$
of irreducible $G$-equivariant perverse sheaves on the nilpotent cone admits a block decomposition, in which \emph{cuspidal pairs} play a fundamental organizing role. Cuspidal pairs serve as the basic building blocks: every simple perverse sheaf arises via parabolic induction from cuspidal data on a Levi subgroup \cite{AJHR,AJHR2,AJHR3,AHJR4}.

In the $\mathbb{Z}$-graded setting, the analogous notion is that of \emph{primitive pairs}.  We fix a cocharacter $\chi: \mathbb{C}^\times \to G$. The group $\mathbb{C}^\times$ acts on $\mathfrak{g}$ via the adjoint action, and the space $\mathfrak{g}_n$ represents the $n$-th weight space under this action, thereby defining a grading on $\mathfrak{g}$. The centralizer $G_0$ of $\chi(\mathbb{C}^\times)$ also acts on $\mathfrak{g}_n$ via the adjoint action. 

The $G_0$-equivariant derived category of sheaves with coefficients in a field $\Bbbk$, $D^b_{G_0}(\mf{g}_n,\Bbbk)$ has been studied in \cite{Lu} when the characteristic of $\Bbbk$ is $0$ and  in \cite{Ch, Ch1} when the characteristic is positive. In positive characteristic, parity sheaves, introduced in \cite{parity} play  a similar role to the intersection cohomology complexes ($\mc{IC}$'s) in characteristic $0$.  For any pair $(\mc{O},\mc{L})$, where $\mc{O}$ is a $G_0$-orbit in $\mf{g}_n$ and $\mc{L}$ is a $G_0$-equivariant irreducible local system on $\mc{O}$, there exists at most one indecomposable parity sheaf $\mc{E}$ up to shift with the property that $\mc{E}|_{\mc{O}}=\mc{L}[\dim \mc{O}]$, and we write $\mc{E}(\mc{O},\mc{L})$ to denote $\mc{E}$.   We will denote the collection of all such pairs $(\mc{O},\mc{L})$ by $\ms{I}(\mf{g}_n)$. Similarly, on the nilpotent cone, for each pair $(C,\mc{F})$, where $C$ is a $G$-orbit in the nilpotent cone $\mc{N}$ and $\mc{F}$ is a $G$-equivariant irreducible local system on $C$, there exists at most one parity sheaf, denoted by $\mc{E}(C,\mc{F})$ with the property  $\mc{E}(C,\mc{F})|_{C}=\mc{F}[\dim C] $.

A primitive pair on both nilpotent cone and graded Lie algebras arises from a cuspidal pair defined on the nilpotent cone or graded Lie algebra of a Levi subgroup (Definition \ref{def4.1} and \ref{def4.2}). Lusztig introduced this concept in \cite{Lu}, showing that  when the sheaf coefficients are coming from the field $\Bbbk$ of characteristic $0$, the set $
\mathscr{I}(\mathfrak{g}_n)$ decomposes into subsets $\mathscr{I}(G,L,\mathcal{L})$,
where $L$ is a Levi subgroup and $\mathcal{L}$ is a primitive local system supported on an nilpotent $L$-orbit. This provides a graded counterpart to the generalized Springer correspondence. Similar phenomena were later observed in the work of Yun and Lusztig \cite{LY} on $\mathbb{Z}/m$-graded Lie algebras, where primitive pairs again play a central role in the block decomposition of 
$\mathscr{I}(\mathfrak{g}_i)$, where $\mf{g}_i$ is a $\mb{Z}/m$-graded piece of $\mf{g}$ and $\mathscr{I}(\mathfrak{g}_i)$ denote the collection of $G_{\underbar{0}}$-equivariant irreducible local system on $\mf{g}_i$ with $G_{\underbar{0}}$ being an appropriate  analogue of $G_0$.
Motivated by these developments, one expects a parallel theory for $\mathbb{Z}$-gradings when the sheaf coefficients lie in a field $\Bbbk$ of positive characteristic.

This paper is a continuation of \cite{Ch,Ch1}, where parity sheaves and the Fourier--Sato transform on graded Lie algebras were studied in positive characteristic. The assumptions \cite[Assumption ~2.4]{Ch1} and  conjectures \cite[Conjectures ~2.8, 2.9]{Ch1} stated there are also assumed to hold in the present setting. Here we prove: 
\begin{theorem}
	Any parity sheaf
$\mathcal{E}(\mathcal{O},\mathcal{L}) \in D^b_{G_0}(\mathfrak{g}_n)$
can be expressed as
 $\mathcal{E}(\mathcal{O},\mathcal{L}) \oplus \mathcal{E}_1 \cong \mathcal{E}_2,$
where $\mathcal{E}_1$ and $\mathcal{E}_2$ are parity complexes induced from some primitive pairs.
\end{theorem}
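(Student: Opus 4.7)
The strategy is to bootstrap from the cuspidal analogue established in \cite{Ch}: every indecomposable parity sheaf $\mc{E}(\mc{O},\mc{L})$ is a direct summand of some $\inn_L^G \mc{E}(C,\mc{F})$ induced from a cuspidal pair $(C,\mc{F})$ on a Levi $L$. The present statement strengthens this in two ways: the inducing data must be \emph{primitive}, and the complementary summand $\mc{E}_1$ must itself be realized as an induced primitive complex.

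First, I would fix a triple $(L,C,\mc{F})$ witnessing $\mc{E}(\mc{O},\mc{L})$ as a summand of $\inn_L^G \mc{E}(C,\mc{F})$, giving a decomposition
$$\inn_L^G \mc{E}(C,\mc{F}) \;\cong\; \mc{E}(\mc{O},\mc{L}) \;\oplus\; \bigoplus_i \mc{E}(\mc{O}_i,\mc{L}_i).$$
Next, using Definitions~\ref{def4.1} and~\ref{def4.2}, I would upgrade the cuspidal datum to a primitive one: a cuspidal pair on the nilpotent cone of $L$ gives rise, via the graded inclusion and a further induction step, to a primitive pair in the graded setting, and transitivity of induction then identifies $\inn_L^G \mc{E}(C,\mc{F})$ with a complex $\mc{E}_2$ induced from primitive data.

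For each residual summand $\mc{E}(\mc{O}_i,\mc{L}_i)$ appearing above, I would apply the same construction to embed it in an auxiliary primitive induction, and then amalgamate these into a single complex $\mc{E}_1$ induced from primitive data. Cancellation via Krull--Schmidt for parity complexes, together with an induction on $\dim \overline{\mc{O}}$ (or on the semisimple rank of $G$, with base case the open orbits where the cuspidal case is immediate), produces the desired identity $\mc{E}(\mc{O},\mc{L}) \oplus \mc{E}_1 \cong \mc{E}_2$.

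The main obstacle is this last amalgamation: guaranteeing that the complement $\mc{E}_1$ is itself a parity complex induced from primitive pairs rather than merely a formal direct sum of indecomposable parity sheaves. Here I expect to use the preservation of primitivity under the Fourier--Sato transform (asserted in the abstract and proved later in the paper) as a compatibility tool, potentially by transporting the problem to the Fourier dual where support constraints simplify. The cleanest packaging of the argument is via the split Grothendieck group of $G_0$-equivariant parity complexes: the target identity becomes $[\mc{E}(\mc{O},\mc{L})] = [\mc{E}_2] - [\mc{E}_1]$ with both classes expressible as sums of primitive inductions, and the induction closes by Krull--Schmidt cancellation in this group.
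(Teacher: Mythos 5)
Your overall picture (induct, reduce via parabolic induction from smaller Levis, invoke the Fourier--Sato transform to move between $\mathfrak{g}_n$ and $\mathfrak{g}_{-n}$) matches the paper's strategy at a high level, but the way you set up the induction and identify the base case is wrong, and the Grothendieck-group packaging does not close the gap you correctly flag. First, you treat ``open orbits where the cuspidal case is immediate'' as the base case; in fact the open-orbit situation with $(G,\chi)$ $n$-rigid is the \emph{hardest} case, not the easiest, and it is exactly where the naive dimension induction stalls because the construction of \cite[\S 5.2]{Ch} would give $P=G$. The paper handles this last case by a dichotomy you do not mention: if $\overline{\mathcal{O}}=\mathfrak{g}_n$ but the Fourier transform $\Phi_{\mathfrak{g}_n}\mathcal{E}(\mathcal{O},\mathcal{L})$ is \emph{not} supported on the open orbit of $\mathfrak{g}_{-n}$, one transports the problem to $\mathfrak{g}_{-n}$, applies the non-open-orbit case there, and pulls back; if the Fourier transform \emph{is} also supported on the open orbit, then $(\mathcal{O},\mathcal{L})$ is itself primitive by Theorem~\ref{prop4.9}, and this is the true terminal case of the induction.

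Second, your ``upgrade the cuspidal datum to a primitive one and amalgamate the residuals'' step is precisely the unresolved gap, and neither Krull--Schmidt cancellation nor working in the split Grothendieck group supplies the missing content. Cuspidal pairs are trivially primitive on their own Levi, but the induced complex $\operatorname{Ind}_L^G\mathcal{E}(C,\mathcal{F})$ is a good object (direct sum of quasi-monomial objects) only under the additional hypotheses of Theorem~\ref{th4.7}, in particular $n$-rigidity of $(L,\chi)$. Without tracking this, the residual summands $\mathcal{E}(\mathcal{O}_i,\mathcal{L}_i)$ need not assemble into a complex induced from primitive data; the paper resolves this by a genuine double induction (on the reductive quotient and on $\dim\overline{\mathcal{O}}$) combined with the Fourier-transport argument above, and each inductive step produces actual isomorphisms (via \cite[Theorems 20, 30, 37]{Ch} and the transitivity of induction), not merely Grothendieck-group identities. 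The Grothendieck-group reformulation is equivalent to the isomorphism statement once all terms are honest parity complexes, so it neither loses nor gains anything---it simply does not address why the residuals land where you need them to.
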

 This result generalizes Theorem~36 of \cite{Ch} and can be viewed as the graded analogue of the role played by induced cuspidal perverse sheaves in the generalized Springer correspondence. In that correspondence, cuspidal pairs generate the entire category through parabolic induction; in our setting, primitive pairs serve the same purpose for graded Lie algebras.

We also prove that a primitive pair on the nilpotent cone gives rise to a primitive pair in the graded setting under suitable assumptions. Furthermore, we study how primitive pairs behave under the Fourier--Sato transform $\phign$, which is a functor from $D^b_{G_0}(\mf{g}_n)$ to $D^b_{G_0}(\mf{g}_{-n})$, as developed in \cite{Ch1}. In particular, we establish the following key statement:

\begin{theorem}
Let $(\mathcal{O},\mathcal{L}) \in \mathscr{I}(\mathfrak{g}_n)$, where $\mathcal{O}$ is the unique open $G_0$-orbit in $\mathfrak{g}_n$, and suppose
$\Phi_{\mathfrak{g}_n}\mathcal{E}(\mathcal{O},\mathcal{L})[\dim \mathfrak{g}_n] \cong \mathcal{E}(\mathcal{O}',\mathcal{L}')[\dim \mathfrak{g}_{-n}]$, with $(\mathcal{O}',\mathcal{L}') \in \mathscr{I}(\mathfrak{g}_{-n})$ and $\mathcal{O}'$ the unique open $G_0$-orbit in $\mathfrak{g}_{-n}$.
Then $(\mathcal{O},\mathcal{L})$ is a primitive pair.
\end{theorem}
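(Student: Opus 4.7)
The plan is to combine Theorem~1 (which decomposes any parity sheaf as a direct summand of an induced primitive complex) with the Fourier--induction compatibility established in \cite{Ch1}, and to exploit the openness of both $\mathcal{O}$ and $\mathcal{O}'$ to pin down a distinguished summand through the Fourier transform.

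First I apply Theorem~1 to $\mathcal{E}(\mathcal{O},\mathcal{L})$, obtaining an isomorphism $\mathcal{E}(\mathcal{O},\mathcal{L}) \oplus \mathcal{E}_1 \cong \mathcal{E}_2$, where $\mathcal{E}_2$ is a direct sum of induced primitive parity complexes $\inn^{\mathfrak{g},\mathfrak{p}_\alpha}(\mathcal{F}_\alpha)$, each $\mathcal{F}_\alpha$ being primitive on a Levi $\mathfrak{l}_\alpha$. By Krull--Schmidt for parity complexes, the indecomposable $\mathcal{E}(\mathcal{O},\mathcal{L})$ appears inside a single distinguished summand $\inn^{\mathfrak{g},\mathfrak{p}_0}(\mathcal{F}_0)$, and since $\mathcal{O}$ is the unique open orbit, this summand is pinned down by its top-dimensional support.

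Next I apply $\Phi_{\mathfrak{g}_n}[\dim \mathfrak{g}_n]$ to the isomorphism. By the Fourier--induction compatibility of \cite{Ch1}, each term $\Phi_{\mathfrak{g}_n}\,\inn^{\mathfrak{g},\mathfrak{p}_\alpha}(\mathcal{F}_\alpha)$ rewrites (up to a precise shift by the relevant codimensions) as $\inn^{\mathfrak{g},\mathfrak{p}_\alpha}(\Phi_{\mathfrak{l}_{\alpha,n}}\mathcal{F}_\alpha)$. By hypothesis the left-hand side becomes $\mathcal{E}(\mathcal{O}',\mathcal{L}')[\dim \mathfrak{g}_{-n}]\oplus \Phi_{\mathfrak{g}_n}\mathcal{E}_1$, so Krull--Schmidt places $\mathcal{E}(\mathcal{O}',\mathcal{L}')$ as a summand of $\inn^{\mathfrak{g},\mathfrak{p}_0}(\Phi_{\mathfrak{l}_{0,n}}\mathcal{F}_0)$. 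The crucial step is then to upgrade $\Phi_{\mathfrak{l}_{0,n}}\mathcal{F}_0$ from a mere parity complex to a primitive parity complex on $\mathfrak{l}_{0,-n}$: this is the Fourier--invariance of primitivity on the smaller Levi $\mathfrak{l}_0$. It follows by induction on the rank of $G_0$, with the base case of a torus being trivial since every equivariant local system on a torus is cuspidal and the Fourier transform preserves local systems. Once this is established, the defining property ``primitive $=$ summand of an induced cuspidal'' together with the transitivity of parabolic induction shows that $\mathcal{E}(\mathcal{O}',\mathcal{L}')$ is a summand of an induced cuspidal complex, hence primitive. Running the same argument in the reverse direction, via involutivity of $\Phi$ up to shift, finally transfers primitivity back to $\mathcal{E}(\mathcal{O},\mathcal{L})$.

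The main obstacle is the inductive step establishing Fourier--invariance of primitivity on the Levi $\mathfrak{l}_0$; the openness of both $\mathcal{O}$ and $\mathcal{O}'$ is used critically here, since it is precisely what guarantees that the distinguished summands on the two sides of $\Phi$ are characterized by their supports, allowing the descent from $G_0$ to $\mathfrak{l}_0$ to be carried out unambiguously and permitting the inductive transfer of the primitive property from $\mathcal{F}_0$ to $\Phi_{\mathfrak{l}_{0,n}}\mathcal{F}_0$.
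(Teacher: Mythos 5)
Your proposal has a serious circularity problem: the ``Theorem~1'' you invoke at the outset is Theorem~\ref{th5.4} of the paper, whose proof (Step~5) explicitly appeals to Theorem~\ref{prop4.9}, i.e.\ to the very statement you are trying to prove. You cannot use the decomposition theorem to prove the Fourier criterion, since the decomposition theorem's induction needs the Fourier criterion to handle the open-orbit/$n$-rigid case. The correct starting point, as the paper does, is the weaker but independently-established \emph{normality} of $\mc{E}(\mc{O},\mc{L})$ (\cite[Th.~36]{Ch}), which produces a parabolic $P$, Levi $L$, and \emph{cuspidal} (not primitive) pair $(\mc{O}_L,\mc{E}_L)\in\ms{I}(\mf{l}_n)^{\cu}$ with $\mc{E}(\mc{O},\mc{L})$ a summand of $\inn^{\mf{g}}_{\mf{p}}\mc{E}(\mc{O}_L,\mc{E}_L)$.

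Beyond the circularity, the route you sketch does not close the argument. You replace the paper's two decisive ingredients with an unsubstantiated claim that ``Fourier transform preserves primitivity on the Levi,'' proved by ``induction on the rank of $G_0$.'' That is not a result established in \cite{Ch1} nor is it needed; what \emph{is} available is that $\Phi$ exchanges \emph{cuspidal} pairs with cuspidal pairs (\cite[Th.~4.5]{Ch1}), which is the precise input the paper uses to obtain $(\mc{O}'_L,\mc{E}'_L)\in\ms{I}(\mf{l}_{-n})^{\cu}$. Also, your identification of the primitive pair $(\tilde\mc{O},\tilde\mc{L})$ with the given $(\mc{O},\mc{L})$ is glossed over by a Krull--Schmidt/support argument that is not sufficient: in the paper one must first verify the two nonvacuity conditions $\mc{O}\cap\pi^{-1}(\mc{O}_L)\neq\emptyset$ and $\mc{O}'\cap\pi^{-1}(\mc{O}'_L)\neq\emptyset$ and invoke Proposition~\ref{prop4.8} (Lusztig's \cite[Prop.~12.2]{Lu}) to get $\mc{O}=\tilde\mc{O}$, and then use Theorem~\ref{th4.7}(1) to conclude that $\inn^{\mf{g}}_{\mf{p}}\mc{E}(\mc{O}_L,\mc{E}_L)$ is a direct sum of shifts of a single parity sheaf, forcing $\mc{L}=\tilde\mc{L}$. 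Finally, your phrase ``primitive $=$ summand of an induced cuspidal'' is not the definition: Definition~\ref{def4.2} imposes specific isomorphisms of component groups $L_0^{\pi(x)}/(L_0^{\pi(x)})^\circ\cong P_0^x/(P_0^x)^\circ\cong G_0^x/(G_0^x)^\circ$, and being a direct summand of an induced cuspidal complex does not by itself guarantee those.
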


This result demonstrates that primitivity is preserved under the Fourier transform, highlighting a deep compatibility between the geometric and graded structures.

\section*{Organization}
The paper is organized as follows. Section~1 recalls the necessary background, notations, and assumptions from \cite{Ch,Ch1}. Section~2 introduces primitive pairs for graded Lie algebras and proves that primitive pairs on the nilpotent cone induce primitive pairs in the graded setting. Section~3 studies their geometric properties, including $n$-rigidity. Section~4 analyzes the interaction of primitive pairs with the Fourier--Sato transform and proves Proposition~4.2. Finally, Section~5 introduces quasi-monomial and good objects and establishes a decomposition theorem expressing any parity sheaf as a direct summand of those induced from primitive data.

\section{Background}\label{background}

Let $\Bbbk$ be a field of characteristic $\ell\geq 0$. We  consider  sheaves with coefficients in $\Bbbk$. Here we briefly recall some definitions and results from \cite{Ch} and \cite{Ch1}.

Let  $G$  be a connected, reductive, algebraic group over 
$\mathbb{C}$ and let $\mathfrak{g}$ be the Lie algebra of $G$. 
We  fix a cocharacter $\chi:\mathbb{C}^\times \to G$ and define: \[G_0=\{g\in G| g\chi(t)=\chi(t)g, \forall{t}\in \mathbb{C}^\times\}.\]
 For $n\in \mathbb{Z}$, define: \[\mathfrak{g}_n=\{x\in \mathfrak{g}|\hspace{1mm} \mathrm{Ad}(\chi(t))x=t^nx, \forall{t}\in \mb{C}^\times\}.\] This defines a grading on $\mathfrak{g}$,
 \[\mathfrak{g}=\bigoplus_{n\in \mathbb{Z}}\mathfrak{g}_n.\]Clearly, $\mathfrak{g}_0=\lie(G_0)$ and
 $G_0$ acts on $\mathfrak{g}_n$. 	For $n\neq0$, $G_0$ acts on $\mathfrak{g}_n$ with only finitely many orbits.
We will consider the $G_0$-equivariant bounded derived category of constructible sheaves on $\mf{g}_n$, $D^b_{G_0}(\mf{g}_n,\Bbbk)$ or $D^b_{G_0}(\mf{g}_n)$. We follow the notations from \cite{Ch} and \cite{Ch1}. Let $\mc{N}_G$ denote the nilpotent cone of $G$.  The irreducible perverse sheaves on $\mc{N}_G$ are in bijection with the set \[\ms{I}(G)=\{(C,\mc{E})| C \text{ is a $G$-orbit in }\mc{N}_G \text{ and $\mc{E}$ is a local system on } C \}.\] similarly the irreducible perverse sheaves on $\mf{g}_n$ are in bijection with \[\ms{I}(\mf{g}_n)=\{(\mc{O},\mc{L})| \mc{O} \text{ is a $G_0$-orbit in }\mf{g}_n \text{ and $\mc{L}$ is a local system on } \mc{O} \}.\]   To use the results from \cite{Ch} and \cite{Ch1} we make the assumption that $\ell$  is `Pretty good' and `big enough' for $G$  \cite[Assumption ~7]{Ch}. We also assume Conjecture 2.8 and Conjecture 2.9 from \cite{Ch1} are true in our set-up.

Let $P$ be a parabolic subgroup of $G$ with unipotent radical $U_P$ and let $L\subset P$ be a Levi factor of $P$. One can identify $L$ with $P/U_P$ through the natural morphism, $L\xhookrightarrow{}P \twoheadrightarrow{} P/U_P$. As  defined in \cite{AJHR2}, we have a functor $\rnn^G_P: D^b_G(\mc{N}_G)\to D^b_L(\mc{N}_L)$.  
 A simple object $\mathcal{F}$ in $\p_G(\mathscr{N}_G,\Bbbk)$ is called cuspidal if $\rnn^G_P(\mc{F})=0$, for any proper parabolic $P$ and  Levi factor $L\subset P$.
	 A pair $(C,\mathcal{E})\in \mathscr{I}(G)$, is called cuspidal if the corresponding simple perverse sheaf $\mathcal{IC}(C,\mathcal{E})$ is cuspidal. We will denote the collection of all cuspidal pairs on $\mc{N}_G$ by $\ms{I}(G)^{\cu}$. 
	 Using modular reduction map \cite[~ 2.3]{AJHR}, we can define $0$-cuspidal and we will denote this collection by $\ms{I}(G)^{0-\cu}$ and $\ms{I}(G)^{0-\cu}\subset \ms{I}(G)^{\cu}$ \cite[Lemma ~2.3]{AJHR}.
	  A brief discussion on cuspidal pairs and $0$-cuspidal pairs can also be found in \cite[Section ~2]{Ch}.

\begin{definition}
A pair $(\mathcal{O},\mathcal{L})\in \mathscr{I}(\mathfrak{g}_n,\Bbbk)$ will be called  cuspidal if there exists a pair $(C,\mc{E})\in \ms{I}(G)^{0-\cu}$, such that $C\cap \mf{g}_n=\mc{O}$ and $\mc{L}=\mc{E}|_{\mc{O}}$.
  We will denote the set of all cuspidal pairs on $\mf{g}_n$ by 
$\mathscr{I}(\mathfrak{g}_n)^{\cu}$.
\end{definition}

\begin{definition}[The set $J_n$]
 We define,
\[
J_n=\{\varphi:\mathfrak{sl}_2\to\mathfrak g \mid 
\varphi(e)\in\mathfrak g_n,\ \varphi(f)\in\mathfrak g_{-n},\ \varphi(h)\in\mathfrak g_0\}.
\]
\end{definition}

\begin{definition}[{\em $n$-rigid} (\cite{Lu})]
\label{def1.3}
The pair $(G,\chi)$ is called \emph{$n$-rigid} if there exists a Lie algebra
homomorphism $\varphi:\mathfrak{sl}_2\to\mathfrak g$ satisfying:
\begin{enumerate}
\item $\varphi\in J_n$;
\item\label{item:weights1} if we write the $\mathbb C^\times$--cocharacter
      $\widetilde\varphi:\mathbb C^\times\to G$ induced by $\varphi$ and set
      for each $m\in\mathbb Z$
      \[
      \mathfrak m_m:=\{X\in\mathfrak g\mid
      \operatorname{Ad}(\widetilde\varphi(t))X=t^m X\},
      \]
      then
      \[
      \mathfrak m_m=\mathfrak g_{\,\frac{n m}{2}}\text{, if }\frac{n m}{2}\in\mathbb Z;
      \]
\item\label{item:weights2} and
      \[
      \mathfrak m_m=0\text{, if }\frac{n m}{2}\notin\mathbb Z.
      \]
\end{enumerate}
\end{definition}

\begin{definition}[{\em $n$-adapted} \cite{Lu}]\label{def:n-adapted}
A homomorphism $\varphi:\mathfrak{sl}_2\to\mathfrak g$ (equivalently its
associated cocharacter $\widetilde\varphi:\mathbb C^\times\to G$) is said to
be \emph{$n$-adapted} to the grading cocharacter $\chi$ if the following hold:
\begin{itemize}
\item $\varphi\in J_n$ (so $\varphi(e)\in\mathfrak g_n$, etc.);
\item the cocharacters $\chi$ and $\widetilde\varphi$ commute (hence one has a
      simultaneous bigrading of $\mathfrak g$ by the two $\mathbb C^\times$--actions);
\item the weight spaces for $\widetilde\varphi$ align with the $\chi$--grading
      in the sense of items (\ref{item:weights1})--(\ref{item:weights2}) above:
      i.e. the $\widetilde\varphi$--weight $m$ subspace equals
      $\mathfrak g_{nm/2}$ when $nm/2\in\mathbb Z$, and is zero otherwise.
\end{itemize}
(When such a $\varphi$ exists one often says ``$\varphi$ is $n$-adapted to~$\chi$''.)
\end{definition}

\section{Primitive Pair} \label{sec5}

Primitive pairs were introduced by Lusztig \cite{Lu} both on the nilpotent cone and on the graded Lie algebra. One of the main   goals of that paper was to define a block decomposition of $\i'(\mf{g}_n)$ using primitive pairs on the nilpotent cone. In this section, we aim to establish positive characteristic analogues of several results from \cite{Lu}. A long-term goal is to study the block decomposition of $\ms{I}(\mf{g}_n)$ in positive characteristic.

A primitive pair on both nilpotent cone and graded Lie algebras arises from a cuspidal pair defined on the nilpotent cone or graded Lie algebra of a Levi subgroup. The definition below formalizes this idea by requiring the existence of an appropriate parabolic subgroup $P$ and a graded version of the compatibility conditions that characterize primitivity on the nilpotent cone.

\begin{definition}\label{def4.1}
A pair $(C,\mc{E})\in \ms{I}(G)$ is said to be \emph{primitive} if there exists a parabolic subgroup $P$ with a Levi factor $L$ and a cuspidal pair $(C_L,\mc{E}_L)\in\i'(L)^{0-\cu}$ such that the following hold:
\begin{enumerate}
    \item There exists a $P$-orbit $C'$ in $\mc{N}_P$ such that for all $x\in C'$, we have $\pi(x)\in C_L$, where $\pi: \mf{p}\to \mf{l}$.
    \item For any $x\in C'$, we have\[L^{\pi(x)}/(L^{\pi(x)})^\circ \cong P^x/(P^x)^\circ \cong G^x/(G^x)^\circ.
    \]
    \item $\mc{E}|_{C'}\cong (\pi|_{C'})^* \mc{E}_L$.
\end{enumerate}
\end{definition}

The definition of a primitive pair on $\mf{g}_n$ is analogous to the one above for the nilpotent cone.

\begin{definition}\label{def4.2}
A pair $(\mc{O},\mc{L})\in \ms{I}(\mf{g}_n)$ is said to be \emph{primitive} if there exists a parabolic subgroup $P$ containing the image of $\chi$ (defined in Section \ref{background}), with Levi factor $L$, and a cuspidal pair $(\mc{O}_L,\mc{E}_L)\in \i'(\mf{l}_n)^{\cu}$ such that the following hold:
\begin{enumerate}
    \item There exists a $P_0$-orbit $\mc{O}'$ in $\mf{p}_n$ such that for all $x\in \mc{O}'$, we have $\pi(x)\in \mc{O}_L$, where $\pi: \mf{p}_n\to \mf{l}_n$.
    \item For any $x\in \mc{O}'$, we have
    \[
    L_0^{\pi(x)}/(L_0^{\pi(x)})^\circ \cong P_0^x/(P_0^x)^\circ \cong G_0^x/(G_0^x)^\circ.
    \]
    \item $\mc{L}|_{\mc{O}'}\cong (\pi|_{\mc{O}'})^* \mc{E}_L$.
\end{enumerate}
\end{definition}

\begin{theorem}\label{th5.3}
Assume $(C,\mc{E})\in \ms{I}(G)$ is a primitive pair with $C\cap \mf{g}_n\neq \emptyset$, associated with a parabolic subgroup $P$ containing a Levi factor $L$ and a cuspidal pair $(C_L,\mc{E}_L)\in \ms{I}(L)^{0-\cu}$ such that $\chi(\mb{C}^\times)\subset L$ and $C_L\cap \mf{l}_n\neq \emptyset$. Then $(C,\mc{E})\in \ms{I}(G)$ gives rise to a primitive pair $(\mc{O},\mc{L})\in \ms{I}(\mf{g}_n)$.
\end{theorem}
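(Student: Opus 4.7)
I plan to produce the graded primitive pair explicitly from the same parabolic $P$ and Levi $L$ that witness primitivity of $(C,\mc{E})$ on $\mc{N}_G$. Since $\chi(\mathbb{C}^\times)\subset L\subset P$, the subalgebras $\mf{l},\mf{p},\mf{u}_P$ inherit the $\chi$-grading, the projection $\pi:\mf{p}\to\mf{l}$ preserves grading, and the subgroups $P_0:=P\cap G_0$ and $L_0:=L\cap G_0$ admit a Levi decomposition $P_0=L_0\ltimes(U_P\cap G_0)$. The first step is to choose $y\in C_L\cap\mf{l}_n$ and to show that the $P$-orbit $C'\subset C_L+\mf{u}_P$ supplied by Definition~\ref{def4.1} meets $\mf{p}_n$ in a fibre over $y$. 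This uses that the connected group $\chi(\mathbb{C}^\times)\subset P$ preserves $C'$ setwise (permuting the discrete set of $P$-orbits in $\mc{N}_P$) and acts on the affine fibre $\pi^{-1}(y)\cap C'\subset y+\mf{u}_P$, so that a $\chi$-weight-$n$ lift $x$ of $y$ exists. I then set $\mc{O}:=G_0\cdot x$, $\mc{O}':=P_0\cdot x$, $\mc{O}_L:=L_0\cdot y$, and $\mc{L}:=\mc{E}|_{\mc{O}}$, noting that $\mc{O}'\subset C'\cap\mf{p}_n$ and $\mc{O}'\subset\mc{O}\cap\mf{p}_n$ by construction.

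With these data, cuspidality of $(\mc{O}_L,\mc{E}_L|_{\mc{O}_L})\in\ms{I}(\mf{l}_n)^{\cu}$ boils down to the single-orbit identity $C_L\cap\mf{l}_n=\mc{O}_L$; this follows from the rigidity of $0$-cuspidal pairs on the reductive group $L$ together with the connectedness of $\chi(\mathbb{C}^\times)$, which forces it to stabilize $C_L$ setwise. Conditions~(1) and~(3) of Definition~\ref{def4.2} are then immediate. For~(1), any $z=p_0\cdot x\in\mc{O}'$ satisfies $\pi(z)=\overline{p_0}\cdot\pi(x)\in L_0\cdot y=\mc{O}_L$, where $\overline{p_0}\in L_0$ is the image of $p_0$ under $P_0\twoheadrightarrow L_0$. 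For~(3), restricting the identity $\mc{E}|_{C'}\cong\pi^*\mc{E}_L$ of Definition~\ref{def4.1}(3) to $\mc{O}'\subset C'$ gives $\mc{L}|_{\mc{O}'}=\mc{E}|_{\mc{O}'}\cong(\pi|_{\mc{O}'})^*(\mc{E}_L|_{\mc{O}_L})$.

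The main obstacle is condition~(2), the triple of component-group isomorphisms
\[
L_0^{x}/(L_0^{x})^\circ \;\cong\; P_0^{x}/(P_0^{x})^\circ \;\cong\; G_0^{x}/(G_0^{x})^\circ.
\]
For each $H\in\{L,P,G\}$ one has $H_0^{x}=(H^{x})^{\chi(\mathbb{C}^\times)}$: although $\chi(t)$ scales $x$ by $t^n$, it normalizes $H^{x}=H^{t^nx}$, so its fixed points on $H^{x}$ coincide with $Z_H(\chi)\cap H^{x}$. The nilpotent-cone clause of Definition~\ref{def4.1} already supplies the isomorphisms among the $\pi_0(H^{x})$, so the remaining task is to prove $\pi_0((H^{x})^\chi)\cong\pi_0(H^{x})$ for each $H$ and check compatibility with the inclusions $L^x\hookrightarrow P^x\hookrightarrow G^x$. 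This is the technical crux: because the centralizers $H^x$ are non-reductive, no elementary torus-fixed-point theorem applies, and one must analyse how $\chi$ interacts with the Levi decomposition of $H^x$. The standard route is to pick an $n$-adapted $\mathfrak{sl}_2$-triple at $x$ in the sense of Definition~\ref{def:n-adapted}, arrange that $\chi(\mathbb{C}^\times)$ commutes with this triple, and exploit the resulting equivariant Levi decomposition of $H^x$ to show that each connected component contains a $\chi$-fixed point; a component-counting argument then yields the required isomorphism. Once condition~(2) is verified, $(\mc{O},\mc{L})\in\ms{I}(\mf{g}_n)$ is primitive with primitive datum $(\mc{O}_L,\mc{E}_L|_{\mc{O}_L})$ on $\mf{l}_n$, as required.
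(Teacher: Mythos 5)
Your overall strategy matches the paper's: keep the same datum $(P,L,C_L,\mc{E}_L)$, extract a graded cuspidal datum $(\mc{O}_L,\mc{E}_L|_{\mc{O}_L})$ on $\mf{l}_n$, define $\mc{O}',\mc{O},\mc{L}$ from it, and check the three conditions of Definition~\ref{def4.2}, with conditions~(1) and~(3) being formal and condition~(2) being the crux. That diagnosis is right. However, two of the steps you treat as routine are exactly where the work lives, and both have gaps as written.

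First, your construction of the weight-$n$ lift $x$ does not work as stated. You say $\chi(\mathbb{C}^\times)$ ``acts on the affine fibre $\pi^{-1}(y)\cap C'\subset y+\mf{u}_P$,'' but since $y\in\mf{l}_n$ one has $\chi(t)\cdot y=t^n y$, so $\chi(t)$ carries $\pi^{-1}(y)\cap C'$ to $\pi^{-1}(t^ny)\cap C'$, not to itself; a contraction argument would need a limit of $t^{-n}\chi(t)\cdot z$ that has no reason to exist inside the locally closed orbit $C'$. The paper avoids the issue entirely: it defines $\mc{O}'$ as the set of $x\in\mf{p}_n$ lying in $\mf{m}'_n$ for some Levi complement $M'\subset P$ containing $\chi(\mathbb{C}^\times)$ with $\pi(x)\in\mc{O}_L$; since all Levi complements of $P$ are $P_0$-conjugate this is a single $P_0$-orbit, and taking $M'=L$ shows $\mc{O}_L\subset\mc{O}'\neq\emptyset$. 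Relatedly, your justification that $C_L\cap\mf{l}_n$ is a single $L_0$-orbit (``connectedness of $\chi(\mathbb{C}^\times)$ forces it to stabilize $C_L$ setwise'') is not the relevant content: stability of $C_L$ under $\chi$ is automatic; what one needs is the $n$-rigidity input from \cite{Lu} that a single nilpotent $L$-orbit meets $\mf{l}_n$ in a single $L_0$-orbit.

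Second, and more seriously, your plan for condition~(2) is only a sketch and is vaguest precisely where the real difficulty lies. Your observation that $H_0^x=(H^x)^{\chi(\mathbb{C}^\times)}$ is correct, and so is the instinct to use $n$-adapted $\mf{sl}_2$-triples, but ``exploit the resulting equivariant Levi decomposition of $H^x$ \dots a component-counting argument then yields the required isomorphism'' is not an argument, and the actual route is not the naive one. The paper proves: (i) $P_0^x\to L_0^{\pi(x)}$ has connected kernel, hence is an isomorphism on $\pi_0$; (ii) by \cite[Prop.~4.3]{Lu}, $(L,\chi)$ is $n$-rigid, so \cite[Prop.~4.2(c)]{Lu} gives $L^{\pi(x)}/(L^{\pi(x)})^\circ\cong L_0^{\pi(x)}/(L_0^{\pi(x)})^\circ$, which already proves $\mc{L}$ is irreducible (a point you omit but which is needed for $(\mc{O},\mc{L})\in\ms{I}(\mf{g}_n)$); (iii) for the $G$-side isomorphism, it chooses an $\mf{sl}_2$-triple $\phi$ at $x$ with image in $\mf{m}'$, builds from $\phi$ and $\chi$ an \emph{auxiliary} Levi $M\supset M'$ by the construction of \cite[\S6.2]{Ch}, invokes \cite[Thm.~6.3]{Ch} to see $(M,\chi)$ is $n$-rigid, and then chains $M^x/(M^x)^\circ\cong M_0^x/(M_0^x)^\circ$, $M_0^x/(M_0^x)^\circ\cong G_0^x/(G_0^x)^\circ$ (this is \cite[Prop.~5.8]{Lu}), $M'^x/(M'^x)^\circ\cong G^x/(G^x)^\circ$ (from Definition~\ref{def4.1}), and $M'^x/(M'^x)^\circ\cong M^x/(M^x)^\circ$ (from $\mf{m}'\subset\mf{m}$). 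The auxiliary Levi $M$, which is neither $L$ nor a Levi factor of $P$, together with Lusztig's Prop.~5.8, is the essential device here, and nothing in your sketch anticipates it. Until you supply an argument of comparable precision for the $\pi_0$-isomorphisms, the proof is incomplete.
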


\begin{proof}
Let $C'$ be the $P$-orbit in $\mc{N}_P$ satisfying the conditions in Definition~\ref{def4.1}. Let $\mc{O}_L=C_L\cap \mf{l}_n$ and $\mc{F}_L=\mc{E}_L|_{\mc{O}_L}$. Then, by the definition of a cuspidal pair in the graded setting, $(\mc{O}_L,\mc{F}_L)\in \ms{I}(\mf{g}_n)^{\cu}$. Define
\[
\mc{O}'=\{x\in \mf{p}_n \mid \text{there exists a Levi } M'\subset P \text{ with } \chi(\mb{C}^\times)\subset M',\, x\in \mf{m}'_n,\, \pi(x)\in \mc{O}_L\}.
\]
Since all Levi subgroups inside $P$ are $P_0$-conjugate, $\mc{O}'$ is a single $P_0$-orbit in $\mf{p}_n$. Let $\mc{O}$ be the unique $G_0$-orbit in $\mf{g}_n$ containing $\mc{O}'$, and set $\mc{L}=\mc{E}|_\mc{O}$. We claim that $(\mc{O},\mc{L})$ is primitive.

By construction, $\mc{O}'$ satisfies the first condition in Definition~\ref{def4.2}. Since $(C,\mc{E})\in \ms{I}(G)$ is primitive, for $x\in \mc{O}'$ we have
\[
L^{\pi(x)}/(L^{\pi(x)})^\circ \cong P^x/(P^x)^\circ \cong G^x/(G^x)^\circ.
\]
First, we show that $\mc{L}$ is irreducible. It suffices to show that the map $G_0^x/(G_0^x)^\circ \to G^x/(G^x)^\circ$ is surjective.

We have the following commutative diagram:
\[
\begin{tikzcd}
L_0^{\pi(x)}/(L_0^{\pi(x)})^\circ \arrow[d] & P_0^x/(P_0^x)^\circ \arrow[l] \arrow[d]\arrow[r] & G_0^x/(G_0^x)^\circ \arrow[d] \\ 
L^{\pi(x)}/(L^{\pi(x)})^\circ  & P^x/(P^x)^\circ \arrow[l] \arrow[r] & G^x/(G^x)^\circ
\end{tikzcd}
\]
The lower horizontal maps are isomorphisms by Definition~\ref{def4.1}. The map $P_0^x\to L_0^{\pi(x)}$ has connected kernel, hence induces an isomorphism on component groups. By \cite[Prop.~4.3]{Lu}, $(L,\chi)$ is $n$-rigid, and therefore by \cite[Prop.~4.2(c)]{Lu},
\begin{equation}\label{4.1}
L^{\pi(x)}/(L^{\pi(x)})^\circ\cong L_0^{\pi(x)}/(L_0^{\pi(x)})^\circ.
\end{equation}
Hence, the middle vertical arrow is an isomorphism, implying that in the right square, the composition of the top horizontal and right vertical maps is an isomorphism. This shows that the map $G_0^x/(G_0^x)^\circ \to G^x/(G^x)^\circ$ is surjective, proving that $\mc{L}$ is irreducible.

To show that $(\mc{O},\mc{L})$ is primitive, we still need isomorphisms for the upper horizontal maps.

By the definition of $\mc{O}'$, there exists a Levi subgroup $M'\subset P$ with $\chi(\mb{C}^\times)\subset M'$, $x\in \mf{m}'_n$, and $\pi(x)\in \mc{O}_L$. We can find a Lie algebra homomorphism $\phi: \mf{sl}_2\to \mf{m}'$ with $\phi(e)=x$, $\phi(f)\in \mf{g}_{-n}$, and $\phi(h)\in \mf{g}_0$, where $\{e,h,f\}$ is the standard $\mf{sl}_2$-triple. Let $\tilde{\phi}:SL_2\to M'$ be such that $d\tilde{\phi}=\phi$, and define $\chi': \mb{C}^\times \to M'$ by
\[
\chi'(a)=\tilde{\phi}\begin{pmatrix}
a & 0 \\ 0 & a^{-1}
\end{pmatrix}.
\]
Then $\phi$ satisfies the conditions in \cite[Definition~6.1]{Ch} with respect to $\chi:\mb{C}^\times\to M'$. Viewing $\chi$ as a map to $G$, we use the construction in \cite[§6.2]{Ch} to define the parabolic and Levi subalgebras $\mf{q}$ and $\mf{m}$. It is easy to see that $x\in \mf{m}$. By \cite[Theorem.~6.3]{Ch}, $(M,\chi)$ is $n$-rigid, and hence
\begin{equation}\label{4.2}
M^{x}/(M^{x})^\circ\cong M_0^{x}/(M_0^{x})^\circ, \quad \text{by \cite[Prop.~4.2(c)]{Lu}}.
\end{equation}
Also, by \cite[Prop.~5.8]{Lu},
\begin{equation}\label{4.3}
M_0^{x}/(M_0^{x})^\circ\cong G_0^{x}/(G_0^{x})^\circ.
\end{equation}
As $M'$ is a Levi subgroup of $P$ conjugate to $L$, we have from Definition~\ref{def4.1},
\begin{equation}\label{4.4}
M'^x/(M'^x)^\circ \cong G^x/(G^x)^\circ.
\end{equation}
Since $\phi:\mf{sl}_2\to \mf{m}'$ is $n$-adapted to $\chi$, we have
\[
{}_{m}\mf{m}'=\mf{m}'_{\frac{mn}{2}} \quad \text{if } \tfrac{mn}{2}\in \mb{Z}.
\]
By the construction in \cite[§6.2]{Ch}, $\mf{m}=\bigoplus_{m} ({}_{m}\mf{g}\cap \mf{g}_{\frac{mn}{2}})$ when $\frac{mn}{2}\in \mb{Z}$. Therefore $\mf{m}'\subset \mf{m}$. Hence, the triple $(M,M',x)$ plays the same role as $(G,M',x)$, and we obtain
\begin{equation}\label{4.5}
M'^x/(M'^x)^\circ \cong M^x/(M^x)^\circ.
\end{equation}
Combining \eqref{4.2}, \eqref{4.3}, \eqref{4.4}, and \eqref{4.5}, we get
\[
G_0^x/(G_0^x)^\circ \cong G^x/(G^x)^\circ.
\]

In the commutative diagram above, the top left horizontal map is already an isomorphism. We now show that the top right horizontal map is also an isomorphism. Since the left and right vertical and lower horizontal maps in the right square are all isomorphisms, the top horizontal map must be an isomorphism as well.

Finally, consider the following commutative diagram:
\[
\begin{tikzcd}
C_L & C' \arrow[l, "\pi|_{C'}"'] \arrow[r, hook] & C\\ 
\mc{O}_L \arrow[u, hook'] & \mc{O}' \arrow[u, hook'] \arrow[l, "\pi|_{\mc{O}'}"'] \arrow[r, hook] & \mc{O} \arrow[u, hook']
\end{tikzcd}
\]
Then
\begin{align*}
\mc{L}|_{\mc{O}'} 
&\cong (\mc{E}|_{C'})|_{\mc{O}'} && \text{by the right commutative square},\\
&\cong ((\pi|_{C'})^* \mc{E}_L)|_{\mc{O}'} && \text{by the definition of a primitive pair on $G$},\\
&\cong (\pi|_{\mc{O}'})^* \mc{E}_L|_{\mc{O}_L} && \text{by the left commutative square},\\
&\cong (\pi_{\mc{O}'})^* \mc{F}_L.
\end{align*}
Hence, the third condition in Definition~\ref{def4.2} is satisfied.
\end{proof}

The next theorem appears in \cite{Lu}; its proof is similar in all characteristics.

\begin{lemma}\label{lm5.4}
Assume $(\mc{O},\mc{L})\in \i'(\mf{g}_n)$ is a primitive pair, and let 
\[
P,L,(\mc{O}_L,\mc{L}_L)\in \i'(\mf{l}_n)^{\cu} \quad \text{and} \quad P',L',(\mc{O}_{L'},\mc{L}_{L'})\in \i'(\mf{l}'_n)^{\cu}
\]
be two data consisting of parabolics, Levi subgroups, and cuspidal pairs associated to $(\mc{O},\mc{L})$. Then there exists $g\in G_0$ such that 
\[
P',L',(\mc{O}_{L'},\mc{L}_{L'}) = g\cdot (P,L,(\mc{O}_L,\mc{L}_L)).
\]
\end{lemma}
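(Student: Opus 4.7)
\emph{Plan.} The strategy is to follow Lusztig's argument in characteristic zero, adapting each step to the positive-characteristic framework of \cite{Ch,Ch1}. The overall goal is to reduce the problem to showing that the Levi and cuspidal datum are both determined, up to $G_0$-conjugacy, by the primitive pair $(\mc{O},\mc{L})$ itself.

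First I would reduce to a common base point. Pick $x\in\mc{O}'\subset\mf{p}_n$ and $y\in\mc{O}''\subset\mf{p}'_n$, where $\mc{O}'$ and $\mc{O}''$ are the $P_0$-orbit and $P'_0$-orbit provided by Definition~\ref{def4.2} for each datum. By construction $\mc{O}',\mc{O}''\subset\mc{O}$, so there is $g_1\in G_0$ with $g_1\cdot x=y$. Replace $(P,L,(\mc{O}_L,\mc{L}_L))$ by $g_1\cdot(P,L,(\mc{O}_L,\mc{L}_L))$, so that both data now share the base point $y$. After this step it remains to prove that the two data agree up to conjugation by an element of the centralizer $G_0^y$.

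Next I would attach a canonical Levi to $y$. For each datum apply the $\mf{sl}_2$-triple construction used in the proof of Theorem~\ref{th5.3}: choose an $n$-adapted triple $\phi:\mf{sl}_2\to\mf{l}$ with $\phi(e)=y$ (and similarly $\phi'$ for the primed datum), build the cocharacter $\chi'$, and construct the associated parabolic and Levi $(\mf{q},\mf{m})$, $(\mf{q}',\mf{m}')$ via \cite[\S6.2]{Ch}. By \cite[Theorem~6.3]{Ch} each pair $(M,\chi)$ and $(M',\chi)$ is $n$-rigid, and as in the proof of Theorem~\ref{th5.3} one has $\mf{l}\subset\mf{m}$ and $\mf{l}'\subset\mf{m}'$ with $y$ lying in both $\mf{l}\cap\mf{l}'$. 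Standard conjugacy of cocharacters associated to graded Jacobson--Morozov triples (which is where \cite[Prop.~4.2(c), Prop.~5.8]{Lu} get used, together with the pretty-good hypothesis so that $\mf{sl}_2$-triples through $y$ are $G_0^y$-conjugate in positive characteristic) lets me find $g_2\in G_0^y$ moving $\phi'$ to $\phi$; after this conjugation $M=M'$, and both Levis $L,L'$ are Levis of parabolics of $M$ sharing a common $\chi$-equivariant nilpotent $y$ with $\pi(y),\pi'(y)$ cuspidal on $\mf{l}_n$ and $\mf{l}'_n$ respectively.

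Now I would invoke uniqueness of the cuspidal support. Induction from a cuspidal pair in the graded setting (as developed in \cite{Ch1}) produces the primitive simple object $\mc{E}(\mc{O},\mc{L})[\dim\mc{O}]$ as its unique summand supported generically on $\mc{O}$; standard Mackey/transitivity-of-induction arguments together with the cuspidality condition $\rnn(\mc{L}_L)=0$ for any proper Levi of $L$ then force $L=L'$ and $(\mc{O}_L,\mc{L}_L)=(\mc{O}_{L'},\mc{L}_{L'})$ up to $N_{G_0}(L)$. Finally, once $L=L'$, the parabolics $P,P'$ both contain $L$ and both have $y$ in the nilradical direction determined by the weight decomposition of $\phi$ relative to $\chi$; this pins them down uniquely up to an element of $L_0$, completing the $G_0$-conjugacy claim.

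The main obstacle will be the uniqueness-of-cuspidal-support step: in positive characteristic one must verify the Mackey-type argument for $\Bbbk$-coefficients using parity induction/restriction rather than perverse induction/restriction, and confirm that, within the framework of \cite{Ch,Ch1}, the vanishing condition defining cuspidality in the graded setting (lifted from $0$-cuspidality on $\mc{N}_L$) is strong enough to block summands coming from smaller Levis. The $\mf{sl}_2$-triple canonicity step is the other nontrivial point, but it is already essentially contained in \cite[\S6]{Ch} and \cite[Prop.~4.2, Prop.~5.8]{Lu}, which the author is explicitly invoking in Theorem~\ref{th5.3}.
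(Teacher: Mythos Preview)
The paper gives no proof of its own here; it simply cites \cite[\S11.4]{Lu} and notes beforehand that ``its proof is similar in all characteristics.'' So the relevant comparison is between your plan and Lusztig's argument.

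Your reduction in steps (1)--(2) is in the right spirit. The real divergence is step~(3): you propose to pin down the Levi and cuspidal pair via a Mackey-type uniqueness-of-cuspidal-support argument for parity induction, and you correctly flag this as the delicate point in positive characteristic. But this is not how Lusztig proceeds, and it is precisely why the paper can assert the proof is characteristic-independent. The Levi is recovered \emph{intrinsically} from $(\mc{O},\mc{L})$ as $L=Z_G(S)$, where $S$ is a maximal torus of $(G_0^\phi)^\circ$ for some $\phi\in J_n$ with $\phi(e)=y\in\mc{O}$; compare Proposition~\ref{prop5.4}, stated immediately after this lemma. Two choices of such $S$ are conjugate by an element of $(G_0^\phi)^\circ\subset G_0^y$, so $L$ is determined up to $G_0$-conjugacy by pure group theory over $\mathbb{C}$. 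Once $L$ is fixed, $\mc{O}_L$ is the open $L_0$-orbit in $\mf{l}_n$ and $\mc{L}_L=\mc{L}|_{\mc{O}_L}$, so the cuspidal datum is forced as a restriction; no sheaf-theoretic induction/restriction or Mackey formula enters, and nothing depends on the coefficient field $\Bbbk$. Your proposed route might be made to work, but it manufactures exactly the obstacle you were worried about; Lusztig's structure-theoretic path sidesteps it entirely.
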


The proof follows from \cite[§11.4]{Lu}.

\section{More on Primitive Pairs}\label{subsec5.1}

We now fix the following assumptions:
\begin{itemize}
	\item A primitive pair $(C, \mc{F}) \in \ms{I}(G)$ such that $C \cap \mf{g}_n \neq \emptyset$.
	\item A $G_0$-orbit $\mc{O} \subset \mf{g}_n$ such that $\mc{O} \subset C$.
	\item An element $y \in \mc{O}$ and $\phi \in J_n$ such that $\phi(e) = y$.
	\item A maximal torus $S$ of $(G_0^\phi)^\circ$, where $G_0^\phi = \{ g \in G_0 \mid \mathrm{Ad}(g)x = x \text{ for all } x \in \mathrm{im} \, \phi \}$.
\end{itemize}

Let $\mc{L} = \mc{F}|_\mc{O}$ and set $L = Z_G(S)$. This implies $\chi(\mb{C}^\times) \subset L$.
The proof of the Proposition below is similar to the one in \cite{Lu} and is the same in all characteristics.
\begin{pro}\label{prop5.4}
Under the above assumptions, the following statements hold:
\begin{enumerate}
	\item $\mc{L}$ is an irreducible local system.
	\item Let $\mc{O}_L$ be the unique open $L_0$-orbit in $\mf{l}_n$. Then $\mc{O}_L \subset \mc{O}$ and $(\mc{O}_L, \mc{L}|_{\mc{O}_L}) \in \i'(\mf{l}_n)^{\cu}$.
	\item For any parabolic subgroup $P$ containing $L$, the pair $(\mc{O}, \mc{L})$ is the primitive pair associated to $(P, L, \mc{O}_L, \mc{L}|_{\mc{O}_L})$.
\end{enumerate}
\end{pro}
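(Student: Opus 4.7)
The plan is to derive all three parts from the single structural fact that, since $S$ is a maximal torus of $(G_0^\phi)^\circ$ and $L=Z_G(S)$, Proposition~4.3 of \cite{Lu} gives that $(L,\chi)$ is $n$-rigid. This is the unifying tool, used in conjunction with Propositions~4.2(c) and~5.8 of \cite{Lu} and with the primitivity of $(C,\mc{F})$ on the nilpotent cone.

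First I would locate $y$ inside $\mf{l}_n$ and identify the candidate orbit $\mc{O}_L$. Since $S\subset G_0^\phi$ centralizes $\phi$, it centralizes $y=\phi(e)$, so $y\in \mf{l}\cap \mf{g}_n=\mf{l}_n$. A standard consequence of the $n$-rigidity of $(L,\chi)$, together with the choice of $S$ as a \emph{maximal} torus of $(G_0^\phi)^\circ$, is that $L_0\cdot y$ is the unique open $L_0$-orbit in $\mf{l}_n$; I would take $\mc{O}_L=L_0\cdot y$, noting that $\mc{O}_L\subset \mc{O}$ since $L_0\subset G_0$. This already supplies the first half of part~(2).

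For part~(1), to prove $\mc{L}=\mc{F}|_{\mc{O}}$ is irreducible, it suffices (as in the proof of Theorem~\ref{th5.3}) to show that the comparison map $G_0^y/(G_0^y)^\circ\to G^y/(G^y)^\circ$ is surjective. I would feed three independent isomorphisms into the usual commutative square of component groups: the $n$-rigidity of $(L,\chi)$ combined with Proposition~4.2(c) of \cite{Lu} yields $L^{y}/(L^{y})^\circ\cong L_0^{y}/(L_0^{y})^\circ$; Proposition~5.8 of \cite{Lu} supplies $L_0^{y}/(L_0^{y})^\circ\cong G_0^{y}/(G_0^{y})^\circ$; and the primitivity of $(C,\mc{F})$ provides $L^{y}/(L^{y})^\circ\cong G^y/(G^y)^\circ$. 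Together these force the desired map to be an isomorphism, hence $\mc{L}$ is irreducible.

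For the second half of part~(2) and for part~(3), I would fix a witness $(P^\sharp,L^\sharp,C_{L^\sharp},\mc{E}_{L^\sharp})$ for the primitivity of $(C,\mc{F})$ with $(C_{L^\sharp},\mc{E}_{L^\sharp})\in\ms{I}(L^\sharp)^{0-\cu}$, and use the nilpotent-cone analogue of Lemma~\ref{lm5.4} to $G$-conjugate the witness so that $L^\sharp=L$ and $\chi(\mb{C}^\times)\subset L^\sharp$. Setting $(C_L,\mc{E}_L)=(C_{L^\sharp},\mc{E}_{L^\sharp})$, the openness of $\mc{O}_L$ in $\mf{l}_n$ together with $y\in C_L\cap\mf{l}_n$ yields $\mc{O}_L= C_L\cap\mf{l}_n$, and the identification $\mc{L}|_{\mc{O}_L}\cong\mc{E}_L|_{\mc{O}_L}$ follows by restricting condition~(3) of Definition~\ref{def4.1}, giving part~(2). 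For part~(3), given any parabolic $P\supset L$, I would set $\mc{O}'=C'\cap\mf{p}_n$, where $C'$ is the $P$-orbit in $\mc{N}_P$ provided by Definition~\ref{def4.1}. Conditions~(1) and~(3) of Definition~\ref{def4.2} restrict directly from their counterparts on the nilpotent cone, while condition~(2) follows from the same chain of isomorphisms used above, now also using that $P_0\to L_0$ has connected unipotent kernel so induces an isomorphism on component groups.

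The main obstacle is the conjugation step $L^\sharp=L$: one must verify that the Levi appearing abstractly in the primitivity of $(C,\mc{F})$ can be $G$-conjugated to $L=Z_G(S)$ in a manner compatible with $\chi$, and that the resulting graded pair $(\mc{O}_L,\mc{L}|_{\mc{O}_L})$ genuinely corresponds to the nilpotent datum $(C_L,\mc{E}_L)$. This relies on the interplay between the Jacobson--Morozov datum $\phi$ and the intrinsic Levi structure of the primitive pair, and is the point at which $0$-cuspidality (rather than mere cuspidality) of the witness becomes essential so that the descent to $\mf{l}_n$ behaves correctly in positive characteristic.
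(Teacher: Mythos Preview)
Your strategy has the right ingredients, but there is a genuine circularity that you flag yet do not resolve. You invoke \cite[Prop.~4.3]{Lu} at the very outset to conclude that $(L,\chi)$ is $n$-rigid, and in your argument for part~(1) you use the primitivity isomorphism $L^{y}/(L^{y})^\circ\cong G^y/(G^y)^\circ$. Both of these already presuppose that $L=Z_G(S)$ is (a conjugate of) the Levi appearing in the primitivity witness for $(C,\mc{F})$ and carries the cuspidal pair: \cite[Prop.~4.3]{Lu} is stated for a Levi supporting a cuspidal datum meeting $\mf{l}_n$, and condition~(2) of Definition~\ref{def4.1} only gives the component-group isomorphism for the witness Levi. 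That identification is exactly your ``conjugation step $L^\sharp=L$'', which you leave as the main obstacle. Invoking a nilpotent-cone analogue of Lemma~\ref{lm5.4} does not help here: such a lemma only says any two witness Levis are $G$-conjugate; it gives no mechanism to recognise $Z_G(S)$ as one of them.

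The paper closes this gap by a different route that does not assume $n$-rigidity of $(L,\chi)$ in advance. One first shows that $S$, maximal in $(G_0^\phi)^\circ$, is already a maximal torus of $(G^\phi)^\circ$ (any maximal torus there commutes with $\chi(\mb{C}^\times)$, hence lies in $G_0$), and therefore of $(G^y)^\circ$, since $(G^\phi)^\circ$ is a Levi factor of $(G^y)^\circ$ by \cite[4.2(d)(e)]{Lu}. The key input is then \cite[2.7(f)]{Lu}: any Levi $M'$ occurring in a primitivity witness for $(C,\mc{F})$ with $y\in\mf{m}'$ is the centraliser of a maximal torus of $(G^y)^\circ$. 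Conjugating that torus to $S$ inside $(G^y)^\circ$ forces $M'=L$. Only after this identification is \cite[Prop.~4.3]{Lu} invoked (now legitimately) to place $y$ in the open $L_0$-orbit $\mc{O}_L$. Parts~(1) and~(3) are then obtained directly from Theorem~\ref{th5.3}, rather than by reassembling the component-group diagram by hand. Your proposed $\mc{O}'=C'\cap\mf{p}_n$ for part~(3) would also need an argument that this intersection is a single $P_0$-orbit; the paper avoids this by appealing to the explicit $\mc{O}'$ constructed in Theorem~\ref{th5.3}.
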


\begin{proof}
\begin{enumerate}
	\item This follows from Theorem~\ref{th5.3}.
	\item Since $S$ is a maximal torus in $(G_0^\phi)^\circ$, it is also a torus in $(G^\phi)^\circ$. We claim that $S$ is maximal in $(G^\phi)^\circ$. Indeed, for any maximal torus $T$ in $(G^\phi)^\circ$, it commutes with $\chi(\mb{C}^\times)$, as $\chi(\mb{C}^\times)$ normalizes $(G^\phi)^\circ$. Therefore $T$ is a torus in $(G_0^\phi)^\circ$, hence $T \subset S$. 

By \cite[4.2(d)(e)]{Lu}, $(G^y)^\circ$ admits $(G^\phi)^\circ$ as a Levi factor, so $S$ is also a maximal torus in $(G^y)^\circ$. Let $Q$ be the parabolic subgroup of $G$ with Levi subgroup $M$ and cuspidal pair $(C_M, \mc{F}_M) \in \i'(M)^{0-\cu}$ associated to the primitive pair $(C, \mc{F})$. Since $y \in \mc{O} \subset C$, by conjugating $Q$ by an element of $G$ we may assume $y \in \mf{m}'$ for a Levi subgroup $M'$ of $Q$, and that $\pi(y) \in C_M$. By \cite[2.7(f)]{Lu}, $M'$ is the centralizer of a maximal torus $T$ of $(G^y)^\circ$. Thus $T$ is conjugate to $S$ by an element of $(G^y)^\circ$. Conjugating $Q, M'$ by this element, we may assume $S = T$. As $L = Z_G(S)$, it follows that $L = M'$. Hence $L$ is conjugate to $M$ in $P$.

Let $(C_L, \mc{F}_L) \in \i'(L)^{0-\cu}$ be the cuspidal pair corresponding to $(C_M, \mc{F}_M)$. Since $y \in \mf{m}'$, we have $y \in \mf{l}$. Moreover, as $y \in \mf{g}_n$, it follows that $y \in \mf{l}_n$. Combining $y \in \mf{l}$ and $\pi(y) \in C_M$, we deduce $y \in C_L$. Thus $y$ is distinguished, which implies $y \in \mc{O}_L$ by \cite[4.3]{Lu}. Hence $\mc{O}_L \cap \mc{O} \neq \emptyset$. 

Now $\mc{O}_L$ is an $L_0$-orbit, and $\mc{O}$ is a $G_0$-orbit, so $L_0$ acts on $\mc{O}$ and $L_0 \cdot y = \mc{O}_L \subset \mc{O}$. Setting $\mc{L}_L = \mc{F}_L|_{\mc{O}_L}$, the definition of a cuspidal pair in the graded setting gives $(\mc{O}_L, \mc{L}_L) \in \i'(\mf{l}_n)^{\cu}$.
	\item This follows from Theorem~\ref{th5.3}.
	
\end{enumerate}
\end{proof}

\subsection{{$n$}-Rigidity}

Recall the definition of $n$-rigidity from \ref{def1.3}. In this subsection, we retain all assumptions from the beginning of section \ref{subsec5.1}.  
Additionally, we assume that $(G, \chi)$ is $n$-rigid and that the unique open $G_0$-orbit $\mc{O}_n$ in $\mf{g}_n$ is contained in $C$. Then, by \cite[4.2(f)]{Lu}, we have $C \cap \mf{g}_n = \mc{O}_n$, hence $\mc{O} = \mc{O}_n$. Consequently, $\phi(e) = y \in \mc{O}_n$ by \cite[4.2(a)]{Lu}.  

Since $(G, \chi)$ is $n$-rigid, $\phi$ is $n$-adapted to $\chi$, and by \cite[4.2(e)]{Lu}, $G_0^y = G^\phi$. Therefore, as $G_0^y \subset G_0^\phi$, it follows that $G^\phi \subset G_0^\phi$, hence $G^\phi = G_0^\phi$.  

By \cite[Prop.~11.7]{Lu}, for any parabolic subgroup $P$ of $G$, the intersection $(G^\phi)^{\circ} \cap P$ is a Borel subgroup of $(G^\phi)^{\circ}$. Therefore, $(G_0^y)^{\circ} \cap P$ is a Borel subgroup of $(G_0^y)^{\circ}$.

\begin{lemma}\label{lm4.6}
For $P, L, U$ as defined above, the $P_0$-orbit of $y$ equals $\mc{O}_L + \mf{u}_n$.
\end{lemma}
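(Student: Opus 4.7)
The plan is to prove the two inclusions $P_0\cdot y\subset\mc{O}_L+\mf{u}_n$ and $\mc{O}_L+\mf{u}_n\subset P_0\cdot y$ separately: the first is formal, and the second reduces to a single surjectivity statement which falls out of $n$-rigidity.

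For $P_0\cdot y\subset\mc{O}_L+\mf{u}_n$, I would use the Levi decomposition $P_0=L_0U_0$, valid because $\chi(\mb{C}^\times)\subset L$. For $u=\exp(X)\in U_0$ with $X\in\mf{u}_0$, one expands
\[
\mathrm{Ad}(u)\,y \;=\; y+\sum_{k\ge 1}\tfrac1{k!}\,\mathrm{ad}(X)^k\,y,
\]
and each term $\mathrm{ad}(X)^k y$ lies in $\mf{u}_n$ because $y\in\mf{p}$ and $\mf{u}$ is an ideal of $\mf{p}$; thus $\mathrm{Ad}(u)y\in y+\mf{u}_n$. For $l\in L_0$, applying $\mathrm{Ad}(l)$ and using $\mathrm{Ad}(l)\mf{u}_n=\mf{u}_n$ gives $\mathrm{Ad}(lu)y\in\mc{O}_L+\mf{u}_n$.

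For the reverse inclusion I would reduce everything to the single identity $U_0\cdot y=y+\mf{u}_n$. Granting this,
\[
P_0\cdot y \;=\; L_0\cdot(U_0\cdot y) \;=\; L_0\cdot(y+\mf{u}_n) \;=\; \bigcup_{l\in L_0}\bigl(\mathrm{Ad}(l)y+\mf{u}_n\bigr) \;=\; \mc{O}_L+\mf{u}_n.
\]
The inclusion $U_0\cdot y\subset y+\mf{u}_n$ is the special case already handled. Equality will follow from a dimension/closedness sandwich: if $\mathrm{ad}(y):\mf{u}_0\to\mf{u}_n$ is surjective, then $\dim U_0\cdot y=\dim\mf{u}_0-\dim\ker(\mathrm{ad}(y)|_{\mf{u}_0})=\dim\mf{u}_n=\dim(y+\mf{u}_n)$, and $U_0\cdot y$ is closed in the affine variety $y+\mf{u}_n$ by the Kostant--Rosenlicht theorem that unipotent orbits on affine varieties are closed. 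A closed irreducible subvariety of the irreducible variety $y+\mf{u}_n$ of full dimension must equal the whole.

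The crux is therefore the surjectivity $\mathrm{ad}(y):\mf{u}_0\twoheadrightarrow\mf{u}_n$, which I would extract from $n$-rigidity as follows. Since $(G,\chi)$ is $n$-rigid and $y\in\mc{O}_n$ is in the open $G_0$-orbit, $[\mf{g}_0,y]=\mf{g}_n$. The triangular decomposition $\mf{g}=\mf{l}\oplus\mf{u}\oplus\mf{u}^-$, with $\mf{u}^-$ the unipotent radical of the opposite parabolic, is compatible with the grading, so restricts to $\mf{g}_n=\mf{l}_n\oplus\mf{u}_n\oplus\mf{u}^-_n$ and $\mf{g}_0=\mf{l}_0\oplus\mf{u}_0\oplus\mf{u}^-_0$. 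The bracket relations $[\mf{l}_0,y]\subset\mf{l}_n$ (since $y\in\mf{l}$ and $\mf{l}$ is a subalgebra), $[\mf{u}_0,y]\subset\mf{u}_n$ ($\mf{u}$ an ideal of $\mf{p}$), and $[\mf{u}^-_0,y]\subset\mf{u}^-_n$ ($\mf{u}^-$ an ideal of $\mf{p}^-=\mf{l}\oplus\mf{u}^-$) let one match summands on the two sides of $[\mf{g}_0,y]=\mf{g}_n$ and conclude $[\mf{u}_0,y]=\mf{u}_n$. The main obstacle of the whole proof is precisely this surjectivity; once it is secured via the triangular projection just described, the rest of the argument is essentially formal.
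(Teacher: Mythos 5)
Your proof is correct and self-contained. The paper itself simply cites \cite[Prop.~11.10]{Lu} and gives no argument, so your write-up is a genuine alternative: a direct proof built from the Levi decomposition $P_0=L_0U_0$, the Kostant--Rosenlicht theorem for closedness of the unipotent orbit $U_0\cdot y$ inside $y+\mf{u}_n$, and the key surjectivity $[\mf{u}_0,y]=\mf{u}_n$ obtained by projecting the identity $[\mf{g}_0,y]=\mf{g}_n$ (which holds because $y$ lies in the open $G_0$-orbit $\mc{O}_n$ -- note this is what drives the step, rather than $n$-rigidity per se, though the ambient hypotheses of the subsection use $n$-rigidity to ensure $\mc{O}=\mc{O}_n$ and $y\in\mc{O}_L$) along the $\chi$-graded triangular decomposition $\mf{g}=\mf{u}^-\oplus\mf{l}\oplus\mf{u}$. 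The matching of the three summands is valid: since $[\mf{l}_0,y]\subset\mf{l}_n$, $[\mf{u}_0,y]\subset\mf{u}_n$, $[\mf{u}^-_0,y]\subset\mf{u}^-_n$ individually and their sum is all of $\mf{g}_n$, directness of $\mf{g}_n=\mf{l}_n\oplus\mf{u}_n\oplus\mf{u}^-_n$ forces equality in each component. The buy here is transparency: your argument makes visible exactly which geometric input ($y$ open, hence $\mathrm{ad}(y)$ surjective on $\mf{g}_0$) does the work, and it is independent of the coefficient field, which is appropriate since the statement is purely about complex orbit geometry. One small cosmetic point: you should state explicitly that $y\in\mc{O}_L$ (supplied by Proposition~\ref{prop5.4}(2)), since both inclusions, and in particular $[\mf{l}_0,y]\subset\mf{l}_n$ and $[\mf{u}^-_0,y]\subset\mf{u}^-_n$, use $y\in\mf{l}$.
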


The proof follows from \cite[Prop.~11.10]{Lu}.

\begin{lemma}\label{lm4.5}
Let $x \in \mathcal{O}_L$. Then there is a canonical isomorphism
\[
(G_0^x)^\circ / (P_0^x)^\circ \;\cong\;
\mu^{-1}(x) \cap G_0 \times^{P_0} (\mathcal{O}_L + \mathfrak{u}_n),
\]
where $\mu : G_0 \times^{P_0} (\mathcal{O}_L + \mathfrak{u}_n) \to \mathfrak{g}_n$ is the natural map.
\end{lemma}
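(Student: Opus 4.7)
My plan is to make the fiber $\mu^{-1}(x)$ explicit, reduce it via Lemma~\ref{lm4.6} to the coset space $G_0^x/P_0^x$, and then use the $n$-rigid hypothesis (together with \cite[Prop.~11.7]{Lu}) to identify the identity component with $(G_0^x)^\circ/(P_0^x)^\circ$.

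A point of the source $G_0\times^{P_0}(\mc{O}_L+\mf{u}_n)$ is an equivalence class $[g,v]$, and $\mu$ sends it to $\mathrm{Ad}(g)v$. Imposing $\mu([g,v])=x$ forces $v=\mathrm{Ad}(g^{-1})x$, subject to $\mathrm{Ad}(g^{-1})x\in\mc{O}_L+\mf{u}_n$. Since $y\in\mc{O}_L$ (as shown in the proof of Proposition~\ref{prop5.4}(2)) and $x\in\mc{O}_L$ is $L_0$-conjugate to $y$, Lemma~\ref{lm4.6} gives $P_0\cdot x=P_0\cdot y=\mc{O}_L+\mf{u}_n$. Hence the constraint on $g$ becomes $g\in G_0^x\cdot P_0$, yielding
\[
\mu^{-1}(x)\;\cong\;G_0^xP_0/P_0\;\cong\;G_0^x/(G_0^x\cap P_0)\;=\;G_0^x/P_0^x.
\]

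For the identification at the level of identity components, I use $n$-rigidity. By \cite[Prop.~11.7]{Lu}, $(G^\phi)^\circ\cap P$ is a Borel subgroup of $(G^\phi)^\circ$. Because $(G,\chi)$ is $n$-rigid, $G^\phi=G_0^y$ (as recorded just before Lemma~\ref{lm4.6}); after conjugating by a $G_0$-element carrying $y$ to $x$ (which exists since $\mc{O}_L\sbs\mc{O}=G_0\cdot y$), the same conclusion holds for $x$. Thus $(G_0^x)^\circ\cap P_0=(G_0^x)^\circ\cap P$ is connected and is contained in $P_0^x$, so it lies in $(P_0^x)^\circ$; conversely, $(P_0^x)^\circ$ is a connected subgroup of both $G_0^x$ and $P_0$, hence lies in $(G_0^x)^\circ\cap P_0$. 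This gives $(G_0^x)^\circ\cap P_0=(P_0^x)^\circ$, and restricting the bijection of the previous paragraph to $g\in(G_0^x)^\circ$ produces the canonical map
\[
(G_0^x)^\circ/(P_0^x)^\circ\;\hookrightarrow\;\mu^{-1}(x).
\]

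The main obstacle I expect is upgrading this identification from the identity component of $\mu^{-1}(x)$ to the entire fiber, i.e.\ showing $G_0^x=(G_0^x)^\circ\cdot P_0^x$, equivalently that the natural injection $P_0^x/(P_0^x)^\circ\hookrightarrow G_0^x/(G_0^x)^\circ$ is a bijection. Following \cite[\S11.10--11.11]{Lu}, I would argue that every connected component of $G_0^x$ meets $P_0$: since $(G_0^x)^\circ\cap P_0$ is a Borel of $(G_0^x)^\circ$ and $G_0^x$ normalizes $(G_0^x)^\circ$, each $G_0^x$-translate of this Borel is again a Borel of $(G_0^x)^\circ$, and applying the conjugacy of Borels inside $(G_0^x)^\circ$ shows that every component of $G_0^x$ contains an element of $P_0$, which then lies in $P_0^x$. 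This component-group step is technical but formal once the explicit description of the fiber is available, and it completes the proof.
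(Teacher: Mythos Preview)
Your first step—identifying $\mu^{-1}(x)$ with $G_0^x/P_0^x$ via Lemma~\ref{lm4.6}—is exactly what the paper does. The divergence is in the passage to identity components. The paper's argument is a single line here: by Proposition~\ref{prop5.4}(3) the pair $(\mc{O},\mc{L})$ is already known to be primitive with respect to $(P,L,\mc{O}_L,\mc{L}|_{\mc{O}_L})$, so Definition~\ref{def4.2}(2) gives the component-group isomorphism $P_0^x/(P_0^x)^\circ \cong G_0^x/(G_0^x)^\circ$ directly, whence $G_0^x/P_0^x \cong (G_0^x)^\circ/(P_0^x)^\circ$. You instead try to derive this component-group equality from scratch via the Borel structure of $(G_0^x)^\circ$, which amounts to reproving a piece of the primitivity condition that is already in hand.

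Your Borel-conjugacy sketch also has a gap as written: from the fact that (after translating by some $h\in(G_0^x)^\circ$) the element $h^{-1}g$ normalizes the Borel $B=(G_0^x)^\circ\cap P_0$ of $(G_0^x)^\circ$, you conclude $h^{-1}g\in P_0$. But self-normalization of $B$ only places $h^{-1}g$ in $N_{G_0^x}(B)$; it does not automatically force membership in $P_0$ without additional input (for instance, knowing that $P_0$ is characterized among parabolics of $G_0$ by this intersection, or a separate Lie-algebra argument). This is probably repairable along the lines of \cite[\S11]{Lu}, but the paper's route via primitivity avoids the issue entirely and is the intended shortcut you missed.
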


\begin{proof}
Consider the map
\[
(G_0^x)/(P_0^x) \longrightarrow 
\mu^{-1}(x) \cap G_0 \times^{P_0} (\mathcal{O}_L + \mathfrak{u}_n),
\qquad
g P_0^x \longmapsto (g P_0, x).
\]
This map is well-defined.  
By Proposition~\ref{prop5.4}(2), $\mathcal{O}_L \subset \mathcal{O}$; hence it suffices to prove the statement for any $y \in \mathcal{O}_L$.

The injectivity is immediate.  
Let $g \in G_0$ be such that $\mathrm{Ad}(g^{-1}) y \in \mathcal{O}_L + \mathfrak{u}_n$.  
By Lemma~\ref{lm4.6}, there exists $p \in P_0$ with
\[
\mathrm{Ad}(g^{-1}) y = \mathrm{Ad}(p) y.
\]
Hence $\mathrm{Ad}(g p) y = y$, so we may replace $g$ by $g p$, yielding $g \in G_0^y$.  
Thus the map above is an isomorphism.

Since $(\mathcal{O}, \mathcal{L})$ is primitive with respect to $(P, L, \mathcal{O}_L, \mathcal{L}|_{\mathcal{O}_L})$ and $y \in \mathcal{O}_L$, the definition of a primitive pair gives
\[
G_0^y / (G_0^y)^\circ \;\cong\; P_0^y / (P_0^y)^\circ.
\]
Consequently,
\[
G_0^y / P_0^y \;\cong\; (G_0^y)^\circ / (P_0^y)^\circ,
\]
and the claim follows.
\end{proof}

\begin{remark}\label{rk5.8}
As explained at the beginning of this subsection, $(P_0^y)^\circ$ is a Borel subgroup of $(G_0^y)^\circ$.  
Therefore $(G_0^y)^\circ / (P_0^y)^\circ$ is a flag variety.  
In particular, there exist integers $s_1, \dots, s_n$ such that
\[
R\Gamma\!\left(
\underline{\Bbbk}_{\mu^{-1}(y) \cap G_0 \times^{P_0}(\mathcal{O}_L + \mathfrak{u}_n)}
\right)
\;\cong\;
\bigoplus_{j=1}^n \Bbbk[-2 s_j].
\]
\end{remark}

Before the next theorem, we assume that all $(P_0^y)^\circ$-orbits in $G_0^y / P_0^y$ are $P_0^y$-stable.

An analogue of the following theorem has appeared in \cite{Lu}. Lusztig uses the language of perverse sheaves instead of the parity complexes. Though the Theorem below follows the same pattern as of \cite{Lu} but the details are different as our sheaf coefficients are coming from a field $\Bbbk$ of positive characteristic.

\begin{theorem}\label{th4.7}
Let $(P, L, \mathcal{O}_L, \mathcal{L}|_{\mathcal{O}_L})$ be as above. Then:
\begin{enumerate}
    \item 
    \[
    \mathcal{H}^i\!\left(
    \operatorname{Ind}_{\mathfrak{p}}^{\mathfrak{g}}
    \mathcal{E}(\mathcal{O}_L, \mathcal{L}|_{\mathcal{O}_L})
    \big|_{\mathcal{O}}
    \right)
    \cong
    \begin{cases}
    0, & \text{if $i$ is odd,}\\[3pt]
    \bigoplus_j \mathcal{L}[-2 s_j], & \text{if } i = -\dim \mathcal{O};
    \end{cases}
    \]
    \item $\operatorname{Ind}_{\mathfrak{p}}^{\mathfrak{g}}
    \mathcal{E}(\mathcal{O}_L, \mathcal{L}|_{\mathcal{O}_L})
    \big|_{\mathfrak{g}_n \setminus \mathcal{O}} = 0$;
    \item
    \[
    \operatorname{Ind}_{\mathfrak{p}}^{\mathfrak{g}}
    \mathcal{E}(\mathcal{O}_L, \mathcal{L}|_{\mathcal{O}_L})
    \;\cong\;
    \bigoplus_j \mathcal{E}(\mathcal{O}, \mathcal{L})[-2 s_j];
    \]
    \item
    $\mathcal{E}(\mathcal{O}, \mathcal{L})|_{\mathfrak{g}_n \setminus \mathcal{O}} = 0$.
\end{enumerate}
\end{theorem}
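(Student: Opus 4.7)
The strategy is to prove (1) by a direct stalk computation, then use it together with a support argument to establish (2), combine (1) and (2) with the parity-preservation of induction to deduce (3), and read off (4) as an immediate corollary. For (1), I would realize $\operatorname{Ind}_{\mf{p}}^{\mf{g}}$ as in \cite{Ch,Ch1} via the usual correspondence
\[
\mf{g}_n \xleftarrow{\mu} G_0 \times^{P_0} \mf{p}_n \xrightarrow{\tilde{\pi}} \mf{l}_n,
\]
and apply proper base change to identify the stalk of $\operatorname{Ind}_{\mf{p}}^{\mf{g}} \mc{E}(\mc{O}_L, \mc{L}|_{\mc{O}_L})$ at $y \in \mc{O}$ with the hypercohomology of the fiber $\mu^{-1}(y)$ against the pullback of the parity sheaf (up to a suitable shift). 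Since the parity sheaf restricts on the open orbit to $\mc{L}|_{\mc{O}_L}[\dim \mc{O}_L]$ and the fibre of $\mu$ over $y$ meets $G_0\times^{P_0}(\mc{O}_L+\mf{u}_n)$ in the simply connected flag variety $(G_0^y)^\circ/(P_0^y)^\circ$ (Lemma~\ref{lm4.5}, Remark~\ref{rk5.8}), the pulled-back local system becomes constant; invoking Remark~\ref{rk5.8} yields the claimed direct-sum decomposition, concentrated in even degrees with shifts $-2 s_j$.

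For (2), the main obstacle, the task is to rule out contributions to the stalks away from the open orbit. By Lemma~\ref{lm4.6} the ``open'' locus $G_0 \times^{P_0}(\mc{O}_L + \mf{u}_n)$ has image exactly $\mc{O}$ under $\mu$, so only the values of the pullback of $\mc{E}(\mc{O}_L, \mc{L}|_{\mc{O}_L})$ on $\mf{l}_n\setminus\mc{O}_L$ could a priori contribute to a stalk at $x \notin \mc{O}$. I would stratify $\mu^{-1}(x)$ by the $P_0$-orbit in $\mf{p}_n$ of the representative and apply base change stratum by stratum, using the hypothesis that all $(P_0^y)^\circ$-orbits on $G_0^y/P_0^y$ are $P_0^y$-stable (combined with the $n$-rigidity consequence $G^\phi=G_0^\phi$) to express each stratum contribution as the cohomology of a flag-variety-like bundle on which the relevant parity pullback yields cancelling shifts. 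This mirrors the outline of \cite[Prop.~11.10]{Lu} but with parity sheaves and the positive-characteristic bookkeeping replacing IC sheaves.

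Parts (3) and (4) then follow routinely. Since the induction functor preserves parity \cite{Ch,Ch1}, $\operatorname{Ind}_{\mf{p}}^{\mf{g}} \mc{E}(\mc{O}_L, \mc{L}|_{\mc{O}_L})$ is a parity complex, hence a direct sum of shifts of indecomposable parity sheaves $\mc{E}(\mc{O}', \mc{L}')$. By (2) any $\mc{O}'$ with nonzero contribution must equal $\mc{O}$; Proposition~\ref{prop5.4}(1) ensures $\mc{L}$ is irreducible, and comparing the open restriction with the formula of (1) identifies each local-system summand with $\mc{L}$ and the multiplicity shifts with $-2 s_j$, yielding (3). Finally, (4) is immediate: if $\bigoplus_j \mc{E}(\mc{O},\mc{L})[-2 s_j]|_{\mf{g}_n \setminus \mc{O}} = 0$, then each summand vanishes on $\mf{g}_n\setminus\mc{O}$, forcing $\mc{E}(\mc{O},\mc{L})|_{\mf{g}_n \setminus \mc{O}} = 0$. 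The essential difficulty is concentrated in (2): in Lusztig's characteristic-zero argument the induced perverse sheaf is automatically $\mc{IC}(\mf{g}_n, \mc{L})$ with its prescribed IC support, but parity sheaves do not obey IC support conventions, so controlling the non-open contributions requires the $P_0^y$-stability hypothesis and the $n$-rigidity used in tandem with a careful stratification argument.
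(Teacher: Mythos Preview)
Your overall skeleton---prove (1) by a stalk/fiber computation, get (2) from a support argument, then deduce (3) and (4) formally---matches the paper. But you have inverted where the real work lies, and this leaves a genuine gap in your treatment of (1).

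\textbf{The gap in (1).} You write that on the fiber $\mu^{-1}(y)\cong (G_0^y)^\circ/(P_0^y)^\circ$ ``the pulled-back local system becomes constant'' and then invoke Remark~\ref{rk5.8}. Remark~\ref{rk5.8} is a \emph{non-equivariant} statement: it gives $R\Gamma(\underline{\Bbbk})\cong\bigoplus_j\Bbbk[-2s_j]$ as graded vector spaces. What you actually need for (1) is that, as a $G_0$-equivariant object on $\mc{O}$, the complex $\mu_!\underline{\Bbbk}$ has cohomology sheaves equal to the \emph{trivial} local system; equivalently, that the component group $G_0^y/(G_0^y)^\circ$ acts trivially on each $H^{2s_j}$ of the flag variety. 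Without this, $\mu_!\mu^*\mc{L}\cong(\mu_!\underline{\Bbbk})\otimes\mc{L}$ is a sum of twists of $\mc{L}$ by possibly nontrivial local systems, and you cannot conclude that the summands are copies of $\mc{L}$. This is precisely where the paper spends its effort: it uses the primitive-pair isomorphism $G_0^y/(G_0^y)^\circ\cong P_0^y/(P_0^y)^\circ$ to pass from $G_0^y$- to $P_0^y$-equivariance, then invokes the hypothesis that the $(P_0^y)^\circ$-orbits (Schubert cells) are $P_0^y$-stable, reducing to the statement that pushforward of the constant sheaf from an affine space to a point, $P_0^y$-equivariantly, has constant cohomology. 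In short, the $P_0^y$-stability assumption is consumed in (1), not in (2).

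\textbf{(2) is not the obstacle.} In the paper, (2) is one line: the cuspidal parity sheaf $\mc{E}(\mc{O}_L,\mc{L}|_{\mc{O}_L})$ is clean (its support is $\mc{O}_L$ itself, not the boundary), so the induced complex is literally $\mu_!$ of a sheaf supported on $G_0\times^{P_0}(\mc{O}_L+\mf{u}_n)$, and Step~1 shows $\mathrm{Im}(\mu)=\mc{O}$. There are no ``values of the pullback on $\mf{l}_n\setminus\mc{O}_L$'' to worry about, and no stratification argument is needed. Your proposed treatment of (2) would work in a setting where cleanness fails, but here it is unnecessary; more importantly, the ingredients you planned to spend there ($P_0^y$-stability, $n$-rigidity) are exactly what is missing from your argument for (1). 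Parts (3) and (4) in your plan are fine and agree with the paper.
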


\begin{proof}
\begin{enumerate}
\item
By \cite[Theorem.~26]{Ch}, the parity sheaf $\mathcal{E}(\mathcal{O}_L, \mathcal{L}|_{\mathcal{O}_L})$ exists.  
To prove the first statement, it suffices to show
\[
\operatorname{Ind}_{\mathfrak{p}}^{\mathfrak{g}}
\mathcal{E}(\mathcal{O}_L, \mathcal{L}|_{\mathcal{O}_L})
\big|_{\mathcal{O}_L}
\cong
\bigoplus_j \mathcal{L}|_{\mathcal{O}_L}[-2 s_j].
\]
Then, using the isomorphism
\[
L_0^{\pi(x)} / (L_0^{\pi(x)})^\circ
\;\cong\;
G_0^x / (G_0^x)^\circ, \qquad \forall\, x \in \mathcal{O}_L + \mathfrak{u}_n,
\]
the desired result follows.

\smallskip
\noindent
\emph{Step 1: Description of the image of $\mu$.}
We first claim that the image of
\[
\mu : G_0 \times^{P_0} (\mathcal{O}_L + \mathfrak{u}_n)
\longrightarrow \mathfrak{g}_n
\]
is $\mathcal{O}$.
Indeed, 
\[
G_0 \times^{P_0} (\mathcal{O}_L + \mathfrak{u}_n)
\simeq
\{(g P_0, x) \in G_0/P_0 \times \mathfrak{g}_n
\mid \mathrm{Ad}(g^{-1}) x \in \mathcal{O}_L + \mathfrak{u}_n\},
\]
and $\mu$ is the projection to the second coordinate.
If $x$ lies in the image, then there exists $g \in G_0$ such that
$\mathrm{Ad}(g^{-1}) x \in \mathcal{O}_L + \mathfrak{u}_n$.  
By Lemma~\ref{lm4.6}, $\mathcal{O}_L + \mathfrak{u}_n$ is a $P_0$-orbit, say $\mathcal{O}_P$.  
Hence $\mathrm{Ad}(g^{-1}) x \in \mathcal{O}_P \subset \mathcal{O}$, i.e.\ $x \in \mathcal{O}$.

\smallskip
\noindent
\emph{Step 2: Reduction to the fiber at $y \in \mathcal{O}_L$.}
Fix $y \in \mathcal{O}_L$.  
The following diagram is commutative:
\[
\begin{tikzcd}
\mathcal{O}_L 
& \mathcal{O}_L + \mathfrak{u}_n
  \arrow[l, "\pi"']
  \arrow[r, hook, "e"]
& G_0 \times^{P_0} (\mathcal{O}_L + \mathfrak{u}_n)
  \arrow[r, "\mu"]
& \mathcal{O} \\
& 
& \mu^{-1}(y) \cap G_0 \times^{P_0} (\mathcal{O}_L + \mathfrak{u}_n)
  \arrow[u, hook']
  \arrow[r]
& y \arrow[u, hook']
\end{tikzcd}
\]

One checks that
\[
G_0 \times^{P_0} (\mathcal{O}_L + \mathfrak{u}_n)
\;\cong\;
G_0 \times^{G_0^y}
(\mu^{-1}(y) \cap G_0 \times^{P_0} (\mathcal{O}_L + \mathfrak{u}_n)\bigr),
\]
via the map
\[
(g, (h P_0, y))
\longmapsto
(g h P_0, \mathrm{Ad}(g) y).
\]

Recall $\mc{O}_L+\mf{u}_n$ is a $P_0$-orbit. Hence $G_0\times^{P_0}(\mc{O}_L+\mf{u}_n)$ is isomorphic to a $G_0$-orbit. Therefore any element in $G_0\times^{P_0}(\mc{O}_L+\mf{u}_n)$ is of the form $(gP_0,y)$, whose inverse image under the above map is $(g,(P_0,y))$. Hence the above map is surjective. If $(ghP_0,\mathrm{Ad}(g)y)=(P_0,y)$, then $\mathrm{Ad}(g)y=y$. This implies $g\in G_0^y$. So we get a bijection between $G_0\times^{G_0^y} (\mu^{-1}(y)\cap G_0\times^{P_0}(\mc{O}_L+\mf{u}_n))$ and  $G_0\times^{P_0}(\mc{O}_L+\mf{u}_n)$.

Now $G_0\times^{P_0}(\mc{O}_L+\mf{u}_n)$ is smooth as $\mc{O}_L+\mf{u}_n$ is a $P_0$-orbit. Also by Lemma \ref{lm4.5}, we have $\mu^{-1}(y)\cap G_0\times^{P_0}(\mc{O}_L+\mf{u}_n)\cong (G_0^y)^\circ/(P_0^y)^\circ$. Recall  $(G_0^y)^\circ/(P_0^y)^\circ$ is a flag variety, therefore smooth. Bijection between two smooth varieties induces an isomorphism of varieties. Hence we are done with our claim. 

\smallskip
\noindent
\emph{Step 3: Induction equivalence and local systems.}

The $G_0$-equivariant local systems on $\mc{O}$, denoted by $\mathrm{Loc}_{G_0}(\mc{O})$ is isomorphic to the representation of the fundamental group, $\mathrm{Rep}(\pi_1(\mc{O}))$. 

By step 2 the inclusion,
\[
\mu^{-1}(y) \cap G_0 \times^{P_0} (\mathcal{O}_L + \mathfrak{u}_n)
\xhookrightarrow{} G_0 \times^{P_0} (\mathcal{O}_L + \mathfrak{u}_n)
\]
induces the standard induction equivalence on derived categories. The projection, \[G_0\times^{G_0^y}(\mu^{-1}(y)\cap G_0\times^{P_0}(\mc{O}_L+\mf{u}_n)) \to G_0/G_0^y \cong \mc{O}\] is a fiber bundle with the fiber $\mu^{-1}(y)\cap G_0\times^{P_0}(\mc{O}_L+\mf{u}_n) $. But being a flag variety, $\mu^{-1}(y)\cap G_0\times^{P_0}(\mc{O}_L+\mf{u}_n) $ is simply connected.

As the fiber is simply connected, we have
\[
\pi_1(\mathcal{O}) \;\cong\; \pi_1(G_0 \times^{P_0} (\mathcal{O}_L + \mathfrak{u}_n)).
\]

Now we have,

\begin{align*}
	 \mathrm{Loc}_{G_0}(\mc{O}) & \cong \mathrm{Rep}(\pi_1(\mc{O}))\\ & \cong \mathrm{Rep}(\pi_1(G_0\times^{P_0}(\mc{O}_L+\mf{u}_n))) \\ & \cong \mathrm{Loc}_{G_0^y}( \mu^{-1}(y)\cap G_0\times^{P_0}(\mc{O}_L+\mf{u}_n)) \text{, by induction equivalence.}
\end{align*} 
	On the other hand, by induction equivalence,
	\[ \mathrm{Loc}_{G_0}(\mc{O})\cong  \mathrm{Loc}_{G_0^y}(\{y\}).
	\]Combining all these we get,
	\[\mathrm{Loc}_{G_0^y}(\mu^{-1}(y)\cap G_0\times^{P_0}(\mc{O}_L+\mf{u}_n)) )\cong \mathrm{Loc}_{G_0^y}(\{y\}).
	\]Hence pullback along the map,
	\[\mu^{-1}(y)\cap G_0\times^{P_0}(\mc{O}_L+\mf{u}_n)) \to \{y\}
	\]induces an equivalence of categories.

\smallskip
\noindent
\emph{Step 4: Computation of $\mu^*\mathcal{L}$.}

	Our claim is that,
	\[ \mu^*(\mc{L})\cong (e^*\f^{G_0}_{P_0})^{-1} \pi^*(\mc{L}|_{\mc{O}_L}).
	\]
	From the diagram below,
	\[\begin{tikzcd}
		\mc{O}_L \arrow[d, bend right, "i"'] \arrow[ddr, hook']\\
		\mc{O}_L+\mf{u}_n \arrow[u, "\pi"'] \arrow[d,hook', "e"']\\
		G_0\times^{P_0}(\mc{O}_L+\mf{u}_n)  \arrow[r,"\mu"'] & \mc{O}
	\end{tikzcd}\]
Let $\mc{G}$ be a local system on $G_0\times^{P_0}(\mc{O}_L+\mf{u}_n)$, uniquely determined by, 
\[ (e^*\f^{G_0}_{P_0}) \mc{G}\cong \pi^* (\mc{L}|_{\mc{O}_L}).
\]
By Definition \ref{def4.2}, 
\[\mathrm{Loc}_{L_0}(\mc{O}_L)\cong \mathrm{Loc}_{P_0}(\mc{O}_L+\mf{u}_n),
\] and $i^*$ is the inverse of $\pi^*$.
So we can say, $\mc{G}$ is uniquely determined by,
\[  i^*(e^*\f^{G_0}_{P_0}) (\mc{G})\cong  \mc{L}|_{\mc{O}_L}.\]
From the above diagram, it is clear that,
\[ i^*(e^*\f^{G_0}_{P_0})\mu^*(\mc{L})\cong \mc{L}|_{\mc{O}_L}.
\]Hence $\mc{G}\cong \mu^*\mc{L}$.
Therefore $\inn^\mf{g}_{\mf{p}}\mc{E}(\mc{O}_L,\mc{L}|_{\mc{O}_L})$ becomes $\mu_! \mu^* \mc{L}$.
But now,
\begin{align*}
	 \mu_! \mu^* \mc{L}&\cong \mu_!(\underline {\Bbbk} \otimes^L \mu^*\mc{L})\\ & \cong \mu_!(\underline{\Bbbk})\otimes^L \mc{L}.
\end{align*}
We have the following  diagram,
\[\begin{tikzcd}
	D^b_{G_0}(G_0\times^{P_0}(\mc{O}_L+\mf{u}_n)) \arrow[d, "\cong"]\arrow[r, "\mu_!"] & D^b_{G_0}(\mc{O}) \arrow[d, "\cong"]\\ D^b_{G_0^y}(\mu^{-1}(y)\cap G_0\times^{P_0}(\mc{O}_L+\mf{u}_n)) \arrow[r, "\cong"]& D^b_{G_0^y}(y)  .
\end{tikzcd}\]
Hence $\mc{L}$ can be considered as a $G_0^y$-equivariant local system on $\{y\}$ and $\mu_!\mu^* \mc{L}$ can considered as the composition of the pullback and push-forward along the map,
\[\mu^{-1}(y)\cap G_0\times^{P_0}(\mc{O}_L+\mf{u}_n)\to \{y\}.
\] With an abuse of notation, we also call the above map to be $\mu$. As $\mu^{-1}(y)\cap G_0\times^{P_0}(\mc{O}_L+\mf{u}_n) \cong G_0^y/P_0^y $, if we can show $G_0^y/(G_0^y)^\circ$ acts on $\h^*_{G_0^y}(G_0^y/P_0^y)$ trivially, then by the discussion in Remark \ref{rk5.8}, we will be done. This is same as showing the constant local system along the map,
 \begin{equation}\label{eq5.6}
	D^b_{G_0^y}(G_0^y/P_0^y)\xrightarrow{\mu_!} D^b_{G_0^y}(\{y\}) 
\end{equation}

gets sent to a sheaf complex whose cohomologies are  constant local systems. 
By Definition \ref{def4.2},
\[\mathrm{Loc}_{G_0^y}(\{y\}) \cong \mathrm{Loc}_{P_0^y}(\{y\}).
\] Therefore (\ref{eq5.6}) reduces to showing, the map below,
\[ D^b_{P_0^y}(G_0^y/P_0^y)\to D^b_{P_0^y}(\{y\})
\]sends constant local system to a sheaf complex whose cohomologies are  constant local systems.  
Now  $G_0^y/P_0^y\cong (G_0^y)^\circ/(P_0^y)^\circ$, which is a flag variety stratified by the $(P_0^y)^\circ$-orbits, which are Schubert cells, hence affine. By our assumption, $(P_0^y)^\circ$-orbits are stable under $P_0^y$ in $G_0^y/P_0^y$. Hence our claim further reduces to showing that along the map,
\[ D^b_{P_0^y}(\mb{A}^n)\to D^b_{P_0^y}(\{y\})
\]constant local system gets sent to the constant local system, where $\mb{A}^n$ is an affine space. This is indeed true. This proves (1).

\item
As shown above, $\mathrm{Im}(\mu) = \mathcal{O}$; thus the restriction to $\mathfrak{g}_n \setminus \mathcal{O}$ vanishes, proving (2).

\item
By \cite[Theorem.~26]{Ch}, 
$\operatorname{Ind}_{\mathfrak{p}}^{\mathfrak{g}} 
\mathcal{E}(\mathcal{O}_L, \mathcal{L}|_{\mathcal{O}_L})$ 
is a parity complex.  
Combining with (1), we obtain (3).

\item
This follows directly from (1)--(3).
\end{enumerate}
\end{proof}

\section{Primitive pairs and Fourier transform}\label{sec:FT}

In this section we recall the notion of Fourier transform in the setting of graded Lie algebras and summarize its formal definition and key properties.  Let \(G\), \(\chi\), \(\mf{g} = \bigoplus_{n\in\mathbb Z}\mf{g}_n\), and \(G_0\) be as in Section~\ref{background}.

 The Fourier-Sato transform in the graded setting is the functor,  
\[
\Phi_{\mf{g}_n} : D^b_{G_0}(\mf{g}_n) \;\longrightarrow\; D^b_{G_0}(\mf{g}_{-n})
.\]    
A detailed discussion of Fourier transform can be found in \cite{Ch1}.

To keep track of the grading we use the notation ${^{\mathfrak g,\mathfrak p}}\!\mc{I}_{n}$ for the functor induction and ${^{\mathfrak g,\mathfrak p}}\!\mc{R}_{n} $ for the functor restriction.
In \cite{Ch1} the author shows that under suitable hypotheses on the coefficient field (Assumption 2.4,  Conjectures 2.8 and 2.9), this transform satisfies the following properties:

\begin{enumerate}
  \item \(\Phi_{\mf{g}_n}\) sends parity complexes in \(\mf{g}_n\) to parity complexes in \(\mf{g}_{-n}\).  
  \item \(\Phi_{\mf{g}_n}\) exchanges cuspidal pairs in \(\mf{g}_n\) with cuspidal pairs in \(\mf{g}_{-n}\).  
  \item \(\Phi_{\mf{g}_n}\) intertwines parabolic induction and restriction functors: if \(P\subset G\) is a parabolic with Levi \(L\), then one has  
  \[
    \Phi_{\mf{g}_{-n}}\circ{^{\mathfrak g,\mathfrak p}}\!\mc{I}_{n}
    \;\cong\;
    {^{\mathfrak g,\mathfrak p}}\!\mc{I}_{-n}
    \circ
    \Phi_{\mf l_n}
  \] and,
  \[\Phi_{\mf{g}_{-n}}\circ{^{\mathfrak g,\mathfrak p}}\!\mc{R}_{n}
    \;\cong\;
    {^{\mathfrak g,\mathfrak p}}\!\mc{R}_{-n}
    \circ
    \Phi_{\mf l_n}.
  \]
\end{enumerate}

The goal of this section is to study the Fourier transform of primitive pairs.
We assume that $(G,\chi)$ is $n$-rigid.

\begin{pro}\label{prop4.8}
Assume $(\mc{O},\mc{L}) \in \i'(\mf{g}_n)$ is a primitive pair associated to a parabolic $P$ containing a Levi subgroup $L$ with $(\mc{O}_L,\mc{E}_L) \in \ms{I}(\mf{l}_n)^{\cu}$, and let $\pi: \mf{p} \to \mf{l}$ be the projection map. 
Assume also that $\mc{O}_n$ and $\mc{O}_{-n}$ are the unique open $G_0$-orbits in $\mf{g}_n$ and $\mf{g}_{-n}$ respectively, and that $\mc{O}'_L$ is the unique open $L_0$-orbit in $\mf{l}_{-n}$. 
If
\[
\mc{O}_n \cap \pi^{-1}(\mc{O}_L) \ne \emptyset 
\quad \text{and} \quad 
\mc{O}_{-n} \cap \pi^{-1}(\mc{O}'_L) \ne \emptyset,
\]
then $\mc{O} = \mc{O}_n$.
\end{pro}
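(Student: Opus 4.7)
The plan is to show that the open orbit $\mc{O}_n$ carries a primitive pair with the same Levi datum $(P,L,\mc{O}_L,\mc{E}_L)$ as $(\mc{O},\mc{L})$, and then to identify the two pairs via a uniqueness argument, forcing $\mc{O}=\mc{O}_n$.

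First, by $n$-rigidity of $(L,\chi)$ from \cite[Prop.~4.3]{Lu} together with the fact that cuspidal pairs on $\mf{l}_n$ arise from distinguished $0$-cuspidals on $\mc{N}_L$, the orbit $\mc{O}_L$ is the unique open $L_0$-orbit in $\mf{l}_n$, and similarly $\mc{O}'_L$ is the unique open $L_0$-orbit in $\mf{l}_{-n}$. Hence $\mc{O}_L+\mf{u}_n$ is open in $\mf{p}_n$. By hypothesis (i), there exists $y\in\mc{O}_n\cap(\mc{O}_L+\mf{u}_n)$; set $\mc{O}'_n:=P_0\cdot y$, a $P_0$-orbit inside $\mc{O}_L+\mf{u}_n$ with $\pi(\mc{O}'_n)\subset\mc{O}_L$ by $P_0$-equivariance of $\pi$.

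The heart of the argument is to build an adapted $\mf{sl}_2$-triple at $y$. Using $n$-rigidity of $(G,\chi)$, one obtains $\phi\in J_n$ with $\phi(e)=y$ that is $n$-adapted to $\chi$. Hypothesis (ii) enters here: picking $y^*\in\mc{O}_{-n}\cap\pi^{-1}(\mc{O}'_L)$ and noting that $\phi(f)\in\mc{O}_{-n}$, a $G_0^y$-conjugation arranges $\phi(f)=y^*$, whence $\pi\circ\phi$ is a genuine $\mf{sl}_2$-triple in $\mf{l}$, automatically $L$-adapted to $\chi$ by $n$-rigidity of $L$. Via the Levi construction in \cite[\S 6.2]{Ch}, this positions $\phi$ inside a sub-Levi $\mf{m}\subset\mf{p}$ so that \cite[Prop.~4.2(c)]{Lu} and \cite[Prop.~5.8]{Lu} apply. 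Arguing exactly as in the proof of Theorem~\ref{th5.3}, one deduces
\[
L_0^{\pi(y)}/(L_0^{\pi(y)})^\circ \;\cong\; P_0^y/(P_0^y)^\circ \;\cong\; G_0^y/(G_0^y)^\circ,
\]
where the first isomorphism comes from connectedness of the unipotent kernel of $P_0^y\to L_0^{\pi(y)}$. Setting $\mc{L}_n:=\mc{F}|_{\mc{O}_n}$ for a $G_0$-equivariant local system $\mc{F}$ whose pullback along $\pi|_{\mc{O}'_n}$ agrees with $\mc{E}_L$, the pair $(\mc{O}_n,\mc{L}_n)$ is then primitive for the datum $(P,L,\mc{O}_L,\mc{E}_L)$.

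To conclude, both $\mc{O}'$ and $\mc{O}'_n$ are $P_0$-orbits inside the open locus $\mc{O}_L+\mf{u}_n\subset\mf{p}_n$. The orbit $\mc{O}'_n$ is open in $\mc{O}_L+\mf{u}_n$, since it meets the open subset $\mc{O}_n\cap\mf{p}_n$ of $\mf{p}_n$. A dimension comparison via the parity decomposition of the induced complex $\inn_\mf{p}^\mf{g}\mc{E}(\mc{O}_L,\mc{E}_L)$ and its Fourier dual (using properties (1)--(3) of $\Phi_{\mf{g}_n}$) then forces $\mc{O}'$ to also be the unique open $P_0$-orbit in $\mc{O}_L+\mf{u}_n$, giving $\mc{O}'=\mc{O}'_n$, and hence $\mc{O}=G_0\cdot\mc{O}'=G_0\cdot\mc{O}'_n=\mc{O}_n$. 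The main obstacle is the placement of $\phi$ inside an appropriate Levi of $\mf{p}$: hypothesis (ii) is precisely what allows $\pi\circ\phi$ to define a genuine $L$-adapted triple in $\mf{l}$, without which the Theorem~\ref{th5.3} machinery for the component group chain cannot be invoked at the open-orbit point $y$.
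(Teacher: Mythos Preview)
The paper gives no proof of its own here; it simply refers to \cite[Prop.~12.2]{Lu}, so there is no in-paper argument to compare against line by line.

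Your proposal, however, has a genuine gap in the $\mf{sl}_2$-construction. You fix $y\in\mc{O}_n\cap(\mc{O}_L+\mf{u}_n)$ using hypothesis~(i), and then \emph{independently} choose $y^*\in\mc{O}_{-n}\cap\pi^{-1}(\mc{O}'_L)$ using hypothesis~(ii), asserting that ``a $G_0^y$-conjugation arranges $\phi(f)=y^*$''. But for fixed $y$, the set $\{\phi(f):\phi\in J_n,\ \phi(e)=y\}$ is a single $G_0^y$-orbit inside $\mc{O}_{-n}$, in general a proper subset of $\mc{O}_{-n}$. Hypothesis~(ii) only says that \emph{some} point of $\mc{O}_{-n}$ lies in $\mc{O}'_L+\mf{u}_{-n}$; it does not say that one of these particular $\phi(f)$'s does. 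Without this, $\pi\circ\phi$ need not be an $\mf{sl}_2$-triple in $\mf{l}$, and the Theorem~\ref{th5.3} machinery you invoke for the component-group chain never gets off the ground. Linking the two intersection hypotheses through a \emph{single} compatible $\phi$ is precisely the substance of the proposition, and it is the step you have skipped.

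The concluding paragraph is also circular. You claim that ``a dimension comparison via the parity decomposition of the induced complex and its Fourier dual'' forces the $P_0$-orbit $\mc{O}'$ attached to the given primitive pair to be open in $\mc{O}_L+\mf{u}_n$, but no inequality or decomposition is actually written down, and ``$\mc{O}'$ is open'' is equivalent to the desired conclusion $\mc{O}=\mc{O}_n$. The ``uniqueness argument'' you allude to cannot be Lemma~\ref{lm5.4} either: that lemma says the Levi datum attached to a fixed primitive pair is unique up to $G_0$-conjugacy, not that a fixed Levi datum determines a unique primitive pair, which is the direction you would need.
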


The above proposition is proved in \cite[Prop.~12.2]{Lu}.

\begin{theorem}\label{prop4.9}
Assume $(\mc{O},\mc{L}) \in \i'(\mf{g}_n)$, where $\mc{O}$ is the unique open $G_0$-orbit in $\mf{g}_n$. 
Suppose
\[
\Phi_{\mf{g}_n}\mc{E}(\mc{O},\mc{L})[\dim \mf{g}_n]
 \cong \mc{E}(\mc{O}',\mc{L}')[\dim \mf{g}_{-n}],
\]
with $(\mc{O}',\mc{L}') \in \i'(\mf{g}_{-n})$ and $\mc{O}'$ the unique open $G_0$-orbit in $\mf{g}_{-n}$. 
Then $(\mc{O},\mc{L})$ is a primitive pair.
\end{theorem}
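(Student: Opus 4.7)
The strategy is to combine the decomposition theorem of Section~5, which writes every indecomposable parity sheaf as a direct summand of a parity complex induced from primitive data, with properties (2) and (3) of the Fourier transform, and then to invoke Proposition~\ref{prop4.8} together with Theorem~\ref{th4.7} to identify the primitive pair produced by the decomposition with $(\mc{O},\mc{L})$ itself.

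First I would apply the decomposition theorem to $\mc{E}(\mc{O},\mc{L})$: there exist a parabolic $P$ with Levi $L\supset\chi(\mb{C}^\times)$ and a cuspidal pair $(\mc{O}_L,\mc{E}_L)\in\ms{I}(\mf{l}_n)^{\cu}$ such that $\mc{E}(\mc{O},\mc{L})$ is a direct summand of $\mc{E}':=\inn^{\mf{g}}_{\mf{p}}\mc{E}(\mc{O}_L,\mc{E}_L)$; let $(\mc{O}_1,\mc{L}_1)\in\ms{I}(\mf{g}_n)$ denote the primitive pair associated to $(P,L,\mc{O}_L,\mc{E}_L)$. Applying $\Phi_{\mf{g}_n}$ to this inclusion and using properties (2) and (3) of Section~\ref{sec:FT}, one obtains (up to shift) that $\mc{E}(\mc{O}',\mc{L}')$ is a direct summand of $\inn^{\mf{g}}_{\mf{p}}\mc{E}(\mc{O}'_L,\mc{E}'_L)$, where $(\mc{O}'_L,\mc{E}'_L)\in\ms{I}(\mf{l}_{-n})^{\cu}$ is the Fourier partner of $(\mc{O}_L,\mc{E}_L)$. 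Since $\mc{O}=\mc{O}_n$ is the open orbit and $\mc{E}(\mc{O},\mc{L})|_{\mc{O}}\neq 0$, the image of the induction correspondence $\mu:G_0\times^{P_0}(\mc{O}_L+\mf{u}_n)\to\mf{g}_n$ must meet the $G_0$-invariant set $\mc{O}_n$, so that $\mc{O}_n\cap(\mc{O}_L+\mf{u}_n)=\mc{O}_n\cap\pi^{-1}(\mc{O}_L)\neq\emptyset$. The analogous argument on the Fourier side gives $\mc{O}_{-n}\cap\pi^{-1}(\mc{O}'_L)\neq\emptyset$, so Proposition~\ref{prop4.8} yields $\mc{O}_1=\mc{O}_n=\mc{O}$.

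To match the local systems, I invoke Theorem~\ref{th4.7} applied to the primitive pair $(\mc{O}_1,\mc{L}_1)=(\mc{O},\mc{L}_1)$, which is legitimate since $(G,\chi)$ is $n$-rigid and $\mc{O}_n$ is contained in the $G$-orbit on the nilpotent cone underlying $(P,L,\mc{O}_L,\mc{E}_L)$; this decomposes $\mc{E}'$ as $\bigoplus_j\mc{E}(\mc{O},\mc{L}_1)[-2s_j]$, so the Krull--Schmidt property of parity complexes forces $\mc{L}\cong\mc{L}_1$. Hence $(\mc{O},\mc{L})$ is the primitive pair associated to $(P,L,\mc{O}_L,\mc{E}_L)$. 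The main obstacle is the support analysis needed to verify both non-emptiness conditions of Proposition~\ref{prop4.8}; a secondary subtlety is ensuring that $(\mc{O}'_L,\mc{E}'_L)$ really is a cuspidal pair of the Levi on the $-n$ side whose associated primitive pair fits $(\mc{O}',\mc{L}')$, so that the open-orbit hypothesis on $\mc{O}'$ can be fed symmetrically into Proposition~\ref{prop4.8}.
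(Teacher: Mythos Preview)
Your argument is correct and matches the paper's proof step for step: obtain a cuspidal datum $(P,L,\mc{O}_L,\mc{E}_L)$ making $\mc{E}(\mc{O},\mc{L})$ a summand of the induced complex, pass to the Fourier side via properties~(2)--(3), verify the two non-emptiness conditions of Proposition~\ref{prop4.8} from support considerations, and finish with Theorem~\ref{th4.7}(3) together with Krull--Schmidt for parity complexes.

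The one correction needed is your citation for the first step. You attribute it to the decomposition theorem of Section~\ref{sec6}, but Theorem~\ref{th5.4} invokes Theorem~\ref{prop4.9} in Step~(5) of its own proof, so appealing to it here would be circular. The result you actually use---and spell out correctly in your detailed argument---is that $\mc{E}(\mc{O},\mc{L})$ is a direct summand of a complex induced from a \emph{cuspidal} pair on a Levi; this is the normality statement \cite[Theorem~36]{Ch}, which is exactly what the paper cites and which is logically prior to the present theorem.
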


\begin{proof}
By \cite[Th.~36]{Ch}, $\mc{E}(\mc{O},\mc{L})$ is normal. 
Therefore, there exists a parabolic $P$ with Levi $L$ and $(\mc{O}_L,\mc{E}_L)\in \i'(\mf{l}_n)^{\cu}$ such that $\mc{E}(\mc{O},\mc{L})$ appears as a direct summand of $\inn^{\mf{g}}_{\mf{p}} \mc{E}(\mc{O}_L,\mc{E}_L)$. 

Let $(\tilde{\mc{O}},\tilde{\mc{L}})\in \i'(\mf{g}_n)$ be the primitive pair associated to $P,L$ and $(\mc{O}_L,\mc{E}_L) \in \i'(\mf{l}_n)^{\cu}$. 
By Theorem 4.1 and Corollary 4.2 of \cite{Ch1} induction commutes with the Fourier transform. 
Hence $\phign \mc{E}(\mc{O},\mc{L})$ appears as a direct summand of $\ind_{-n} \Phi_{\mf{l}_{-n}} \mc{E}(\mc{O}_L,\mc{E}_L)$. 

From the definition of induction for a cuspidal pair \cite[Lemma~25]{Ch} and our assumptions, there exist $x \in \mc{O}$ and $g \in G_0$ such that $\mathrm{Ad}(g^{-1})x \in \mc{O}_L + \mf{u}_n$. 
The assumption $\phign \mc{E}(\mc{O},\mc{L}) = \mc{E}(\mc{O}',\mc{L}')$ implies that $\mc{E}(\mc{O}',\mc{L}')$ appears as a direct summand of $\ind_{-n} \Phi_{\mf{l}_{-n}} \mc{E}(\mc{O}_L,\mc{E}_L)$. 

By Theorem 4.5 of \cite{Ch1}, Fourier transform sends cuspidal pairs to cuspidal pairs. 
Hence there exists $(\mc{O}'_L,\mc{E}'_L) \in \i'(\mf{l}_{-n})^{\cu}$ such that 
\[
\Phi_{\mf{l}_{-n}} \mc{E}(\mc{O}_L,\mc{E}_L) = \mc{E}(\mc{O}'_L,\mc{E}'_L).
\]
This implies that $\mc{O}'_L$ is the unique open $L_0$-orbit in $\mf{l}_{-n}$. 

Thus, there exist $y \in \mc{O}'$ and $g' \in G_0$ such that $\mathrm{Ad}(g'^{-1})y \in \mc{O}'_L + \mf{u}_{-n}$. 
Replacing $x$ by $\mathrm{Ad}(g^{-1})x$ and $g$ by $1$, we see that the new $x$ belongs to both $\mc{O}$ and $\mc{O}_L + \mf{u}_n$. Therefore,
\[
\mc{O} \cap \pi^{-1}(\mc{O}_L) \neq \emptyset.
\]
Similarly, one can define $y$ such that $y \in \mc{O}' \cap (\mc{O}'_L + \mf{u}_{-n})$, and hence
\[
\mc{O}' \cap \pi^{-1}(\mc{O}'_L) \neq \emptyset.
\]
Therefore, by Proposition~\ref{prop4.8}, $\mc{O} = \tilde{\mc{O}}$. 

Now we can apply Theorem~\ref{th4.7}(1), which states that $\inn^{\mf{g}}_{\mf{p}} \mc{E}(\mc{O}_L,\mc{E}_L)$ is a direct sum of copies of $\mc{E}(\mc{O},\tilde{\mc{L}})$ up to shift. 
Combining this with our assumption, we obtain $\mc{E}(\mc{O},\mc{L}) = \mc{E}(\mc{O},\tilde{\mc{L}})$, which implies $\mc{L} = \tilde{\mc{L}}$.
\end{proof}

\section{Quasi-Monomial}\label{sec6}

In this section, we introduce \emph{good pairs}, which can be viewed as a generalization of the notion of normal complexes introduced in \cite{Lu} and \cite{Ch}.

\begin{definition}
A pair $(\mc{O},\mc{L}) \in \ms{I}(\mf{g}_n)$ is called \textbf{quasi-monomial} if there exists a parabolic subgroup $P$ with Levi subgroup $L$ such that the following hold:
\begin{enumerate}
    \item $(L,\chi)$ is $n$-rigid.
    \item There exists a primitive pair $(\mc{O}_L,\mc{E}_L) \in \ms{I}(\mf{l}_n)$ such that $\mc{E}(\mc{O},\mc{L})$ is isomorphic, up to shift, to $\inn^{\mf{g}}_{\mf{p}}\mc{E}(\mc{O}_L,\mc{E}_L)$. In this case, we refer to $\mc{E}(\mc{O},\mc{L})$ as a \textbf{quasi-monomial object}.
\end{enumerate}
\end{definition}

\begin{remark}
A quasi-monomial object is said to be \textbf{proper quasi-monomial} if $P$ is a proper parabolic subgroup of $G$.
\end{remark}

\begin{definition}
A parity complex $\mc{E}$ in $D^b_{G_0}(\mf{g}_n)$ is called a \textbf{good object} if $\mc{E}$ is a direct sum of quasi-monomial objects. It is said to be \textbf{proper} if all quasi-monomial summands are proper. 
\end{definition}

\begin{theorem}\label{th5.4}
Let $(\mc{O},\mc{L}) \in \ms{I}(\mf{g}_n)$. Then there exist good objects $\mc{E}$ and $\mc{F}$ in $D^b_{G_0}(\mf{g}_n)$ such that
\[
\mc{E}(\mc{O},\mc{L}) \oplus \mc{E}\cong \mc{F}.
\]
\end{theorem}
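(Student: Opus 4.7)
The plan is to bootstrap from \cite[Theorem~36]{Ch} --- which provides the analogous statement with cuspidal-induced complexes in place of good objects --- and then to upgrade each induced cuspidal summand to a direct sum of quasi-monomial objects by enlarging the Levi to an $n$-rigid one and invoking Theorem~\ref{th4.7}(3).

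First, I would apply \cite[Theorem~36]{Ch} to $(\mc{O},\mc{L})$. This yields parity complexes $\mc{A}_1,\mc{A}_2$ with $\mc{E}(\mc{O},\mc{L})\oplus\mc{A}_1\cong\mc{A}_2$, where each of $\mc{A}_1,\mc{A}_2$ is a direct sum of shifts of complexes of the form $\inn^{\mf{g}}_{\mf{p}_L}\mc{E}(\mc{O}_L,\mc{E}_L)$ with $(\mc{O}_L,\mc{E}_L)\in\ms{I}(\mf{l}_n)^{\cu}$, for various parabolic--Levi pairs $(P_L,L)$ of $G$. It therefore suffices to show that each such $\inn^{\mf{g}}_{\mf{p}_L}\mc{E}(\mc{O}_L,\mc{E}_L)$ is good. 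If $(L,\chi)$ is already $n$-rigid, the cuspidal pair is trivially primitive on $\mf{l}_n$ (take the inner parabolic of $L$ to be $L$ itself), and this induced complex is already quasi-monomial. Otherwise, fix $x\in\mc{O}_L$ together with a Jacobson--Morozov triple $\phi\in J_n$ having $\phi(e)=x$ and image in $\mf{l}$, and apply the construction of \cite[\S6.2]{Ch} to build a Levi $M\subset G$ containing $L$ with $\chi(\mb{C}^\times)\subset M$, so that by \cite[Theorem~6.3]{Ch} the pair $(M,\chi)$ is $n$-rigid. By Theorem~\ref{th5.3}, the cuspidal datum underlying $(\mc{O}_L,\mc{E}_L)$, viewed relative to the embedding $L\subset M$, lifts to a primitive pair $(\mc{O}_M,\mc{L}_M)\in\ms{I}(\mf{m}_n)$; since $(M,\chi)$ is $n$-rigid and $x\in\mc{O}_L\cap\mc{O}_M$, \cite[4.2(f)]{Lu} identifies $\mc{O}_M$ with the unique open $M_0$-orbit in $\mf{m}_n$.

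With $M$ in hand, induction in stages gives
\[
\inn^{\mf{g}}_{\mf{p}_L}\mc{E}(\mc{O}_L,\mc{E}_L)\;\cong\;\inn^{\mf{g}}_{\mf{p}_M}\bigl(\inn^{\mf{m}}_{\mf{p}_L\cap\mf{m}}\mc{E}(\mc{O}_L,\mc{E}_L)\bigr),
\]
and Theorem~\ref{th4.7}(3), applied to $M$ in place of $G$, identifies the inner induction with a direct sum of shifts of $\mc{E}(\mc{O}_M,\mc{L}_M)$. Hence $\inn^{\mf{g}}_{\mf{p}_L}\mc{E}(\mc{O}_L,\mc{E}_L)$ is a direct sum of shifts of the quasi-monomial object $\inn^{\mf{g}}_{\mf{p}_M}\mc{E}(\mc{O}_M,\mc{L}_M)$, hence good; reassembling, $\mc{A}_1$ and $\mc{A}_2$ are good objects, which proves the theorem. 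The main obstacle I anticipate is verifying the full package of hypotheses of Theorem~\ref{th4.7}(3) after replacing $G$ by $M$: the auxiliary assumption on $(P_0^y)^\circ$-orbits in $G_0^y/P_0^y$ has to transfer to $M$, and one must check that the primitive pair on $\mf{m}_n$ produced by Theorem~\ref{th5.3} is genuinely the one associated to the data $(\mf{p}_L\cap\mf{m},L,\mc{O}_L,\mc{E}_L)$ inside $M$, so that Theorem~\ref{th4.7} sees the correct primitive datum.
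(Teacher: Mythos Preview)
Your reduction collapses if \cite[Theorem~36]{Ch} is read correctly. As cited in this paper (see the proof of Theorem~\ref{prop4.9} and the introduction), that result only says each indecomposable parity sheaf is \emph{normal}, i.e.\ occurs as a direct summand of a single induced cuspidal complex $\inn^{\mf g}_{\mf p}\mc E(\mc O_L,\mc E_L)$. It does \emph{not} assert that the complementary summand $\mc A_1$ is itself a direct sum of induced cuspidals. Thus your argument only yields that $\mc E(\mc O,\mc L)$ is a summand of a good object, which is strictly weaker than Theorem~\ref{th5.4}: there one must exhibit a good $\mc E$ with $\mc E(\mc O,\mc L)\oplus\mc E$ good, and controlling the complement is exactly the hard part. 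Indeed, your ``otherwise'' branch is vacuous: whenever $(\mc O_L,\mc E_L)\in\ms I(\mf l_n)^{\cu}$ the underlying $0$-cuspidal orbit on $\mc N_L$ meets $\mf l_n$, so $(L,\chi)$ is already $n$-rigid by \cite[Prop.~4.3]{Lu} (as used in the proof of Theorem~\ref{th5.3}). Hence every induced cuspidal is literally an induction of a primitive pair from an $n$-rigid Levi, and under your reading of Theorem~36 the present theorem would be a one-line corollary with no need for $M$, Theorem~\ref{th5.3}, or Theorem~\ref{th4.7}(3). That this is not how the paper proceeds is strong evidence that Theorem~36 does not have the two-sided form you assume.

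The paper's proof addresses precisely the obstacle you skipped: it runs a simultaneous induction on $\dim\mc O$ and on the semisimple rank (via proper parabolics), and uses the Fourier transform in an essential way. Steps~(1)--(2) handle the non-open or non-$n$-rigid cases by reducing to a proper Levi and absorbing the uncontrolled complement $\mc E'$ (supported on $\bar{\mc O}\setminus\mc O$) via the inductive hypothesis on lower-dimensional orbits. Steps~(3)--(4) use that $\Phi_{\mf g_n}$ commutes with induction to transport goodness across $\mf g_n\leftrightarrow\mf g_{-n}$. The remaining case---$(G,\chi)$ $n$-rigid, $\mc O$ open, and $\Phi_{\mf g_n}\mc E(\mc O,\mc L)$ supported on the open orbit in $\mf g_{-n}$---is exactly where no proper parabolic is available, and there one invokes Theorem~\ref{prop4.9} to conclude $(\mc O,\mc L)$ is itself primitive, hence $\mc E(\mc O,\mc L)$ is already quasi-monomial. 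None of this machinery is visible in your proposal, and it cannot be bypassed by the bootstrap you describe.
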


\begin{proof}
The proof proceeds  by induction on the reductive quotient associated to proper parabolic subgroups containing $\chi(\mathbb{C}^\times)$. The proof is divided into several steps.

\begin{enumerate}
\item 
We first show that the statement holds when $(\mc{O},\mc{L}) \in \ms{I}(\mf{g}_n)$ and either $(G,\chi)$ is not $n$-rigid, or $\mc{O}$ is not the unique open $G_0$-orbit in $\mf{g}_n$.

We argue by induction on $\dim \mc{O}$. Choose $x \in \mc{O}$ and construct $P, L, U, \mf{p}, \mf{l}, \mf{n}$ as in \cite[5.2]{Ch}. Then $P \neq G$. By \cite[Theorem ~28]{Ch}, the element $x$ lies in the open $L_0$-orbit $\mc{O}_L \subset \mf{l}_n$, and the restriction $\mc{L}|_{\mc{O}_L}$ is an irreducible local system, denoted $\mc{L}'$.

By \cite[Theorem ~30]{Ch}, the complex $\inn^{\mf{g}}_{\mf{p}}\mc{E}(\mc{O}_L,\mc{L}')$ has support $\bar{\mc{O}}$ and satisfies
\[
\inn^{\mf{g}}_{\mf{p}}\mc{E}(\mc{O}_L,\mc{L}')|_{\mc{O}} = \mc{L}[\dim \mc{O}_L].
\]
Furthermore, $\inn^{\mf{g}}_{\mf{p}}\mc{E}(\mc{O}_L,\mc{L}')$ is a parity complex by \cite[Theorem ~37]{Ch}. Hence,
\begin{equation}\label{5.6}
\inn^{\mf{g}}_{\mf{p}}\mc{E}(\mc{O}_L,\mc{L}') = \mc{E}(\mc{O},\mc{L}) \oplus \mc{E}',
\end{equation}
where $\mc{E}'$ is a parity complex supported on $\bar{\mc{O}} \setminus \mc{O}$.

By the induction hypothesis, there exist good objects $\mc{E}_1$ and $\mc{E}_2$ in $D^b_{L_0}(\mf{l}_n)$ such that
\[
\mc{E}(\mc{O}_L,\mc{L}') \oplus \mc{E}_1\cong\mc{E}_2.
\]
Applying induction yields
\begin{equation}\label{5.7}
\inn^{\mf{g}}_{\mf{p}}\mc{E}(\mc{O}_L,\mc{L}') \oplus \inn^{\mf{g}}_{\mf{p}}\mc{E}_1
\cong \inn^{\mf{g}}_{\mf{p}}\mc{E}_2.
\end{equation}
Combining (\ref{5.6}) and (\ref{5.7}) gives
\begin{equation}\label{5.8}
\mc{E}(\mc{O},\mc{L}) \oplus \mc{E}' \oplus \inn^{\mf{g}}_{\mf{p}}\mc{E}_1
\cong \inn^{\mf{g}}_{\mf{p}}\mc{E}_2.
\end{equation}

Since support of $\mc{E}'$ has smaller dimension than $\mc{O}$, by the induction hypothesis there exist good objects $\tilde{\mc{E}}'$ and $\tilde{\mc{E}}''$ such that
\begin{equation}\label{5.9}
\mc{E}' \oplus \tilde{\mc{E}}' \cong \tilde{\mc{E}}''.
\end{equation}
Combining (\ref{5.8}) and (\ref{5.9}) yields
\[
\mc{E}(\mc{O},\mc{L}) \oplus \tilde{\mc{E}}'' \oplus \inn^{\mf{g}}_{\mf{p}}\mc{E}_1
\cong \inn^{\mf{g}}_{\mf{p}}\mc{E}_2 \oplus \tilde{\mc{E}}'.
\]
By the transitivity of induction \cite[Theorem ~20]{Ch}, both $\inn^{\mf{g}}_{\mf{p}}\mc{E}_1$ and $\inn^{\mf{g}}_{\mf{p}}\mc{E}_2$ are good objects. This completes Step~(1).

\item
If $(\mc{O}',\mc{L}') \in \ms{I}(\mf{g}_{-n})$ and either $(G,\chi)$ is not $(-n)$-rigid or $\mc{O}'$ is not the unique open $G_0$-orbit in $\mf{g}_{-n}$, the statement follows by an argument parallel to Step~(1).

\item Now we show that for
a proper pair $(\mc{O},\mc{L}) \in \ms{I}(\mf{g}_{-n})$ there exist good objects $\mc{E}$ and $\mc{F}$ in $D^b_{G_0}(\mf{g}_n)$ such that
\[
\Phi_{\mf{g}_{-n}} \mc{E}(\mc{O},\mc{L})\oplus\mc{E}\cong \mc{F}.
\]
Since $\mc{E}(\mc{O},\mc{L})$ is proper, we may write
\[
\mc{E}(\mc{O},\mc{L}) = {}^{\mf{g},\mf{q}}\mc{I}_{-n}\mc{E}(\mc{O}_M,\mc{E}_M),
\]
where $Q$ is a proper parabolic subgroup of $G$ containing $\chi(\mathbb{C}^\times)$, and $(\mc{O}_M,\mc{E}_M) \in \ms{I}(\mf{m}_{-n})$ is a primitive pair with Levi $M \subset Q$.
Since induction commutes with the Fourier transform, we have
\[
\Phi_{\mf{g}_{-n}}\mc{E}(\mc{O},\mc{L}) = {}^{\mf{g},\mf{q}}\mc{I}_n \Phi_{\mf{m}_{-n}}\mc{E}(\mc{O}_M,\mc{E}_M).
\]
By induction, there exist good objects $\mc{E}'_M$ and $\mc{E}''_M$ in $\ms{I}(\mf{m}_n)$ such that
\[
\Phi_{\mf{m}_{-n}}\mc{E}(\mc{O}_M,\mc{E}_M) \oplus \mc{E}'_M \cong \mc{E}''_M.
\]
Hence,
\[
\Phi_{\mf{g}_{-n}}\mc{E}(\mc{O},\mc{L}) \oplus {}^{\mf{g},\mf{q}}\mc{I}_n\mc{E}'_M
\cong {}^{\mf{g},\mf{q}}\mc{I}_n\mc{E}''_M,
\]
and the claim follows from the transitivity of induction.

\item
Now assume $(\mc{O},\mc{L}) \in \ms{I}(\mf{g}_n)$ with $\bar{\mc{O}} = \mf{g}_n$ and the support of $(\phign\mc{E}(\mc{O},\mc{L}))$   is not the unique open $G_0$ orbit in $\mf{g}_{-n}$. By \cite[Theorem ~4.5]{Ch1},  $\phign\mc{E}(\mc{O},\mc{L})$ is a parity sheaf. By Step~(2), there exist good objects $\mc{E}'$ and $\mc{E}''$ in $D^b_{G_0}(\mf{g}_{-n})$ such that
\[
\phign\mc{E}(\mc{O},\mc{L}) \oplus \mc{E}' \cong \mc{E}''.
\]
Applying $\Phi_{\mf{g}_n}$ one more time we get,
\[
\mc{E}(\mc{O},\mc{L}) \oplus \Phi_{\mf{g}_{-n}}\mc{E}' \cong \Phi_{\mf{g}_{-n}}\mc{E}''.
\]
Using Step~(3) for $\mc{E}'$ and $\mc{E}''$ yields the result.

\item
Finally, if $(G,\chi)$ is not $n$-rigid or $\mc{O}$ is not the open $G_0$-orbit, the result follows from Step~(1). If the assumptions of Step~(4) hold, the result follows from that step as well. Therefore, we may assume $(G,\chi)$ is $n$-rigid, $\bar{\mc{O}} = \mf{g}_n$, and the support of $\phign\mc{E}(\mc{O},\mc{L})$ is not unique open $G_0$-orbit in $\mf{g}_{-n}$. By Theorem~\ref{prop4.9}, such $(\mc{O},\mc{L})$ is primitive, hence good.
\end{enumerate}
\end{proof}

\begin{remark}
An analogous statement to Theorem~\ref{th5.4} holds for $\mf{g}_{-n}$, and the proof proceeds identically.
\end{remark}

\end{document}